\documentclass[12pt]{amsart}
\usepackage{amssymb}
\usepackage{amsfonts}
\usepackage{graphicx}
\usepackage{amsmath}
\usepackage{xcolor}

\setcounter{MaxMatrixCols}{10}

\setlength{\textwidth}{16truecm}\oddsidemargin=-0.1truecm
\evensidemargin=-0.1truecm \setlength{\textheight}{20cm}
\newtheorem{theorem}{Theorem}

\newtheorem{conjecture}[theorem]{Conjecture}

\newtheorem{definition}[theorem]{Definition}
\newtheorem{example}[theorem]{Example}

\newtheorem{lemma}[theorem]{Lemma}

\newtheorem{proposition}[theorem]{Proposition}

\newcommand{\C}{\mathbb{C}}
\newcommand{\N}{\mathbb{N}}

\begin{document}

\title{A Fischer type decomposition theorem from the apolar inner product}
\author{ J. M. Aldaz* and H. Render}
\address{H. Render: School of Mathematical Sciences, University College
Dublin, Dublin 4, Ireland.}
\email{hermann.render@ucd.ie}
\address{J.M. Aldaz: Instituto de Ciencias Matem\'aticas (CSIC-UAM-UC3M-UCM)
and Departamento de Matem\'aticas, Universidad Aut\'onoma de Madrid,
Cantoblanco 28049, Madrid, Spain.}
\email{jesus.munarriz@uam.es}
\email{jesus.munarriz@icmat.es}
\thanks{2020 Mathematics Subject Classification: \emph{Primary: 31B05}, 
\emph{Secondary: 35A20,35A10}}
\thanks{Key words and phrases: \emph{Fischer decomposition, Fischer pair, Bombieri norm, 
entire function of finite order}}
\maketitle

\begin{abstract} We continue the study initiated by H. S. Shapiro on Fischer decompositions of entire functions, 
showing that such decomposition exist in a weak sense (we do not prove uniqueness)
under hypotheses regarding the order of the entire function $f$ to be expressed as $f=  P\cdot q+r$, the polynomial $P$, and bounds on the apolar norm of homogeneous
polynomials of degree $m$. 

These bounds, previously used by Khavinson and Shapiro,
and by Ebenfelt and Shapiro, can be interpreted as a quantitative, asymptotic strengthening of Bombieri's inequality. In the special case where both the dimension of the space and the degree of $P$ are two, we characterize for which polynomials $P$ such bounds hold.

\end{abstract}

\section{ Introduction}

Let us denote by $\mathcal{P}\left(  \mathbb{C}^{d}\right)$ the set of all
polynomials with complex coefficients in the variable $z=\left(  z_{1},\dots,z_{d}\right)  \in
\mathbb{C}^{d}$,  and by $\mathcal{P}_{m}\left(
\mathbb{C}^{d}\right)$ the subspace of all homogeneous polynomials of degree
$m.$ Recall that a polynomial $P\left(  z \right)  $ is \emph{homogeneous} of
degree $\alpha$ if $P\left(  t z \right)  =t^{\alpha}P\left(  z \right)  $ for
all $t>0$ and  all $z \in \mathbb{C}^{d}.$ In particular, the zero polynomial is homogeneous of every degree. 
Given a polynomial $P\left(  z\right)  $, we denote by
$P^{\ast}\left(  z\right)  $ the polynomial obtained from $P\left(  z\right)
$ by conjugating its coefficients, and by $P\left(  D\right)  $  the linear
differential operator obtained by replacing the variable $z_{j}$ with the
differential operator $\frac{\partial}{\partial z_{j}}$.

It is well known that a polynomial $P\left( z\right) $ of degree $k$
can be written as a sum of homogeneous polynomials $P_{j}\left( z\right) $
of degree $j$ for $j=0, \dots ,k,$ so 
\begin{equation*}
P\left( z \right) =P_{k}\left( z \right) +\cdots +P_{0}\left( z \right) ,
\end{equation*}
and we call the homogeneous polynomial $P_{k}\left( z \right) $ the {\em leading
term}. 
Often the alternative notation $P\left( z \right) =P_{k}\left( z \right) - \cdots - P_{0}\left( z \right) $ will be more convenient, in relation with formulas regarding division of polynomials.

\vskip .2 cm

Fischer's theorem states that if $P$ is a  polynomial with leading term $P_k$, then 
the following decomposition holds: for every polynomial $f\in\mathcal{P}\left(
\mathbb{C}^{d}\right)  $ there exist \emph{unique} polynomials $q\in
\mathcal{P}\left(  \mathbb{C}^{d}\right)  $ and $r\in\mathcal{P}\left(
\mathbb{C}^{d}\right)  $ such that
\begin{equation} \label{dual}
f=  P\cdot q+r\text{ and }P_k^{\ast}\left(  D\right)  r=0.
\end{equation}

 To indicate the dependency of $q$ on $f$ and $P$ we will often write $q = T_P(f)$. H. S. Shapiro  studied Fischer
decompositions in a wider setting, cf. \cite{Shap89}, going beyond the case of polynomials to
more general function spaces, in particular, to the space $E\left( \mathbb{C}
^{d}\right) $ of all entire functions $f:\mathbb{C}^{d}\rightarrow \mathbb{C}
$. It is useful to adopt a notion introduced in \cite[p. 522]{Shap89}:
suppose that $E$ is a vector space of infinitely differentiable functions $
f:G\rightarrow \mathbb{C}$ (defined on an open subset $G$ 
of $\mathbb{C}^{d}$) that is a module over $\mathcal{P}\left( \mathbb{C}
^{d}\right) $.

\begin{definition} \label{FP} We say that a polynomial $P$ and a differential operator 
$Q\left( D\right) $ form a \emph{Fischer pair $(P, Q)$ for the space }$E$, if for
each $f\in E$ there exist \emph{unique} elements $q\in E$ and $r\in E$ such
that 
\begin{equation}
	f=P\cdot q+r\text{ and }Q\left( D\right) r=0.  \label{eqDecomp}
\end{equation}
We will speak of {\em weak Fischer pairs} when the decomposition $f = P\cdot q+r$ is not assumed to be unique. But the expression ``Fischer decomposition'' will be used even in the absence of uniqueness.
\end{definition}
Shapiro proved in \cite[Theorem 1]{Shap89} that the following version of Fischer's theorem is 
true when $\mathcal{P}\left( \mathbb{C}^{d}\right) $ is replaced by $E\left( 
\mathbb{C}^{d}\right) $: for every {\em homogeneous} polynomial $P$ and every entire function $f$ there exist \emph{unique} entire functions $q$ and $r$ such that
\[
f=  P \cdot q+r\text{ and }P^{\ast}\left(  D\right)  r=0.
\]
That is, $P$ and $P^*$ form a Fischer pair for the entire functions, whenever $P$ is homogeneous. Shapiro conjectured that his theorem held even for non-homogeneous $P$ and arbitrary entire functions, cf. \cite[p. 517 (i)]{Shap89}.  Below we obtain some partial results for an  arbitrary polynomial $P$ with leading term $P_k$, showing that $(P, P^*_k)$ is  a weak Fischer pair  for  entire functions of sufficiently low order.

The following related conjecture, for the special case where the polynomial under consideration is $ P_k - 1$, has been studied in the literature:

\vskip .2 cm

{\bf Conjecture:}
{\em Let $P$ be a nonconstant homogeneous polynomial and define
\[
F_P \left(  q\right)  := P^{\ast}\left(  D\right)  \left[  \left(  P-1\right)
q\right]  .
\]
Then $F_{P}$ is a bijection on the set of all entire functions; equivalently, $P-1$
and $P^{\ast}\left(  D\right)  $ form a Fischer pair for $E\left( 
\mathbb{C}^{d}\right) $.}

\vskip .2 cm

We mention that the equivalence between the 
bijectivity of  the Fischer operator $F_{P}$ and the fact that 
 $P-1$
and $P^{\ast}\left(  D\right)  $ form a Fischer pair, is due to Meril and Struppa, cf. \cite[Proposition 1]{MeSt85}.

\vskip .2 cm

In 	\cite{KhSh92}, D. Khavinson and H.S. Shapiro
prove a partial case of the preceding conjecture,  restricting the class of entire functions under consideration and assuming uniqueness from the outset, cf. \cite[Theorem 3]{KhSh92} (we mention that the proof uses Fredholm theory). 

The homogeneous polynomials $P$ in \cite[Theorem 3]{KhSh92} satisfy a certain property called ``amenability", defined as follows in \cite[p. 464]{KhSh92}:   a homogeneous
	polynomial $P_{k} \in \C[z_1, \dots , z_d]$ of degree $k > 0$ is said to be {\em amenable} if for each 
$j\in  \{1,\dots, d\}$ there is a multi-index $\alpha$, with $|\alpha | = k -1$, such that $D^\alpha P_k$ is a non-zero constant times $z_j$ (note however that no polynomial of degree  $k = 1$ is amenable when $d \ge 2$).
 Khavinson and Shapiro observe that amenability holds in the important special case
$P=\left\vert x\right\vert ^{2}$, so
$P^{\ast}\left(  D\right)  =\Delta$. Furthermore, amenability entails the following bounds, cf. \cite[Lemma 11]{KhSh92}, which represent a weaker assumption:
if $P$  is an amenable homogeneous polynomial and $f$ is homogeneous of degree $m$, that is, $f \in \mathcal{P}_{m}\left(
\mathbb{C}^{d}\right)$, then there is a  constant $C > 0$, independent of $m$, such that
\begin{equation}
\label{KhaSha}
\|P f\|_a \ge C m^{1/2} \|P\|_a \|f\|_a, 
\end{equation}
where $\|\cdot \|_a$ is the norm defined by the apolar inner product (see the next section).
Note that the term $ \|P\|_a$ can be absorbed into the constant $C$.

Satisfaction of the Khavinson-Shapiro bounds (\ref{KhaSha}) appears to be a much more generic
property than amenability. We explore this question in the specific instance where
both  the dimension and the degree are  2, so $P\left(  z_1 ,z_2 \right)  =a z_1^{2}+ b  z_1 z_2 + c z_2^{2}$ (with $a,b,c$ not all 0). While in this case the only amenable polynomials are $ a  z_1^{2} + c z_2^{2}$ when $a, c \ne 0$, and $ b z_1 z_2$ when $b \ne 0$, we prove that the Khavinson-Shapiro bounds are satisfied precisely when 
 $4 a c \ne b^2$, cf. Theorem \ref{twotwo} below.

Recalling  Bombieri's inequality
$$\|P f\|_a \ge \|P\|_a \|f\|_a,
$$
we see that satisfaction of the Khavinson-Shapiro bounds entails a quantitative strengthening of Bombieri's bounds, as $m \to \infty$.

 Let $P_k$ be a nonzero homogeneous polynomial of degree $k$.  By Fischer's Theorem, for each
homogeneous polynomial $f_{m}$ of degree $m$ there exist unique
polynomials  $T_{P_k}$ and $r_{m}$, with $P_k^{\ast}\left(  D\right)  r_{m}=0$ and
\begin{equation} \label{linealT}
f_{m}=P_k\cdot T_{P_k}\left(  f_{m}\right)  + r_{m}.
\end{equation}
This decomposition is orthogonal with respect to the apolar inner product, since by (\ref{eqFischeradjoint}) below
\[
\langle  P_k \cdot T_{P_k}\left(  f_{m}\right)  ,r_{m}\rangle_a  =\langle  T_{P_k}\left(
f_{m}\right)  ,P_k^{\ast}\left(  D\right)  r_{m}\rangle_a  =0.
\]
By the Pythagorean Theorem, and under the assumption that the  Khavinson-Shapiro bounds (\ref{KhaSha}) hold, we obtain
\[
\left\Vert f_{m}\right\Vert _{a}^{2}=\left\Vert P_k\cdot T_{P_k}\left(
f_{m}\right)  \right\Vert _{a}^{2}+\left\Vert r_{m}\right\Vert ^{2}
\geq\left\Vert P_k \cdot T_{P_k }\left(  f_{m}\right)  \right\Vert _{a}^{2} \ge C^2  m  \| T_{P_k }\left(  f_{m}\right) \|_a^2.
\]
This shall be the type of assumption used in the theorem stated next. The main differences with  the Khavinson-Shapiro result lie in the fact  that arbitrary polynomials $P = P_{k}-P_{k - 1}- \cdots - P_{0}$ are used for division, instead of polynomials of the form
$P = P_{k}-1$, and instead of bounds of the form $O(m^{1/2})$ we use $O(m^{\tau/2})$, where $\tau \ge 0$.

\vskip .2 cm

Next we state the main result of the paper.

\begin{theorem}\label{main}
Let $P_{k}$ be a homogeneous polynomial of degree $k > 0$ on $\C^d$, and let us write
$T:=T_{P_{k}}$, where $T_{P_{k}}$ is defined by (\ref{linealT}). Assume that there exist a $C=C(P_{k})>0$ ($C$ independent of
$m$) and an $\tau\in \{0, \dots , k\}$, 
  such that for every $m>0$ and 
every homogeneous polynomial $f_{m}$ of degree $m,$ the following inequality
holds:
\begin{equation}
\left\Vert Tf_{m}\right\Vert _{a}\leq\frac{C}{m^{\tau/2}}\left\Vert
f_{m}\right\Vert _{a} \label{eqT}.
\end{equation}
If for $0\leq j<k$ the polynomials $P_{j}\left(   z\right)  $ are
homogeneous of degree $j$,  and for some $\beta<k$ and
every $j$ with $\beta <j<k$ we have $P_{j}=0$, then
for every entire function $f: \C^d\to \C$ of order 
$\rho$, where $\rho$ satisfies the inequality
$$
\rho (k-\tau) < 2(k-\beta),
$$
there exist entire functions $q$ and $r$ of order $\leq\rho$ with
\[
f=\left(  P_{k}-P_{\beta}- \cdots  -P_{0}\right)  q+r\text{ and }P_{k}^{\ast
}\left(  D\right)  r=0.
\]
\end{theorem}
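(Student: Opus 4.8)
The plan is to convert the division problem into a fixed-point equation for $q$ and then to solve that equation by a Neumann-type series, whose convergence turns out to be controlled exactly by the inequality $\rho(k-\tau)<2(k-\beta)$. First I would reduce as follows. Put $\tilde P:=P_{\beta}+\cdots+P_{0}$, so $P=P_{k}-\tilde P$ with $\deg\tilde P\le\beta<k$. It is enough to produce an entire function $q$ of order $\le\rho$ with
\begin{equation}\label{fixedpoint}
q=Tf+T(\tilde P\,q),
\end{equation}
where $T=T_{P_{k}}$ is extended to entire functions by acting homogeneous component by component (this is legitimate: Bombieri's inequality gives $\|Tg_{m}\|_{a}\le\|g_{m}\|_{a}/\|P_{k}\|_{a}$, so the componentwise sums converge, and the result agrees with the entire Fischer operator of Shapiro's theorem by uniqueness). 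Indeed, given such $q$, the function $g:=f+\tilde P q$ is entire of order $\le\rho$; applying Shapiro's homogeneous Fischer theorem to $P_{k}$ gives unique entire $q',r$ with $g=P_{k}q'+r$ and $P_{k}^{\ast}(D)r=0$, and $q'=Tg=Tf+T(\tilde P q)=q$ by (\ref{fixedpoint}). Hence $f+\tilde P q=P_{k}q+r$, i.e.\ $f=Pq+r$ with $P_{k}^{\ast}(D)r=0$, and $r=f-Pq$ has order $\le\rho$ since multiplication by the polynomial $P$ does not raise the order.

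To solve (\ref{fixedpoint}) I would use the Neumann series $q:=\sum_{n\ge0}A^{n}(Tf)$, where $A$ is the operator $h\mapsto T(\tilde P h)$. Since $\tilde P h$ has homogeneous parts of degrees $\ell,\dots,\ell+\beta$ when $h$ is homogeneous of degree $\ell$, and $T$ lowers degree by $k$, the operator $A$ sends degree $\ell$ into degrees $\ell-k,\dots,\ell-(k-\beta)$; in particular $A$ lowers degree by at least $k-\beta\ge1$. Expanding $A^{n}$ as a sum over chains $(j_{1},\dots,j_{n})\in\{0,\dots,\beta\}^{n}$ (selecting the homogeneous piece $P_{j_{i}}$ of $\tilde P$ at the $i$-th step) and writing $f=\sum_{m}f_{m}$, one sees that a chain of length $n$ contributes to the degree-$L$ part of $q$ only when it starts from $f_{m}$ with $m=L+(n+1)k-(j_{1}+\cdots+j_{n})$, hence $m\ge L+k+n(k-\beta)$. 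Once convergence is established, $A$ acts continuously degree by degree, so $Aq=\sum_{n\ge1}A^{n}(Tf)=q-Tf$, and $q$ does satisfy (\ref{fixedpoint}).

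The core of the argument is the estimate of $\|q_{L}\|_{a}$, obtained by summing the apolar norm of each chain term over $n$ and over the chains of length $n$. Along a chain starting from $f_{m}$ there are $n+1$ applications of $T$, at degrees $m>s_{1}>\cdots>s_{n}\ge k$, each costing by (\ref{eqT}) a factor $\le Cs^{-\tau/2}$, and $n$ multiplications, the $i$-th costing a factor $\lesssim(s_{i}+j_{i})^{j_{i}/2}$ by the companion upper bound to Bombieri's inequality $\|P_{j}h\|_{a}\le\binom{s+j}{j}^{1/2}\|P_{j}\|_{a}\|h\|_{a}$ (valid for $h$ homogeneous of degree $s$); finally, by the description of the order in terms of apolar norms of homogeneous parts (where $\|g_{m}\|_{a}$ equals, up to a polynomial-in-$m$ factor, $\sqrt{m!}$ times the supremum norm of $g_{m}$ on the unit sphere), ``$f$ has order $\le\rho$'' means $\|f_{m}\|_{a}\le(m!)^{1/2-1/(\rho+\varepsilon)}$ for all large $m$ and every $\varepsilon>0$. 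For fixed $L$ and $n$ the least favourable chain is the one that always selects $P_{\beta}$: then $m\approx n(k-\beta)$, all intermediate degrees are of order $n(k-\beta)$, and a Stirling estimate bounds the $n$-th term by
\begin{equation*}
C_{1}^{n}\,\mathrm{poly}(n)\cdot\big((k-\beta)n/e\big)^{n\mu},\qquad
\mu:=\Big(\tfrac{k-\beta}{2}-\tfrac{k-\beta}{\rho+\varepsilon}\Big)+\tfrac{\beta-\tau}{2}=\frac{k-\tau}{2}-\frac{k-\beta}{\rho+\varepsilon},
\end{equation*}
the first bracket reflecting $\|f_{m}\|_{a}\approx(m!)^{1/2-1/(\rho+\varepsilon)}$ with $m\approx n(k-\beta)$, and $\tfrac{\beta-\tau}{2}$ the net effect of the $n$ multiply-then-$T$ steps. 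The hypothesis $\rho(k-\tau)<2(k-\beta)$ permits a choice of $\varepsilon>0$ with $(\rho+\varepsilon)(k-\tau)<2(k-\beta)$, i.e.\ $\mu<0$; then $\big((k-\beta)n/e\big)^{n\mu}=C_{2}^{n}n^{n\mu}$ decays faster than any geometric series, so the sum over $n$ converges even after multiplying by the number $(\beta+1)^{n}$ of chains. Propagating the dependence on $L$ through the same computation yields $\|q_{L}\|_{a}\le C_{\varepsilon'}^{L}(L!)^{1/2-1/(\rho+\varepsilon')}$ for every $\varepsilon'>0$, so $q=\sum_{L}q_{L}$ is a well-defined entire function of order $\le\rho$. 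Together with the reduction this completes the proof: $r:=f-Pq$ is entire of order $\le\rho$ and $f=(P_{k}-P_{\beta}-\cdots-P_{0})q+r$ with $P_{k}^{\ast}(D)r=0$.

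I expect the decisive difficulty to be this last estimate: one must keep track of the iteration count $n$, the output degree $L$ and the input degree $m$ simultaneously, verify rigorously that the controlling exponent is exactly $\mu=\tfrac{k-\tau}{2}-\tfrac{k-\beta}{\rho+\varepsilon}$ — the precise point where the gap $k-\beta$ between the two top homogeneous parts of $P$, the exponent $\tau$ of (\ref{eqT}), and the factorial weight implicit in the apolar norm all interact — and control the full double sum over all chains and all $L$, not merely the worst single chain.
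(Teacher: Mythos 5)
Your proposal is correct and follows essentially the same route as the paper: your Neumann series $\sum_{n\ge 0}A^{n}(Tf)$ with $A(h)=T(\tilde P h)$, expanded over chains $(j_{1},\dots,j_{n})\in\{0,\dots,\beta\}^{n}$, is exactly the iterated formula $T_{P}(f_m)=\sum_{j}\sum_{s_0,\dots,s_j}TP_{s_j}\cdots TP_{s_0}Tf_m$ that the paper imports from \cite[Theorem 17]{Rend08}, and your critical exponent $\mu=\frac{k-\tau}{2}-\frac{k-\beta}{\rho+\varepsilon}$ is precisely the one governing the paper's estimate of the terms $S_{j,M}$. The only packaging differences are that you derive the chain identity from a fixed-point equation and close by applying Shapiro's theorem to $f+\tilde Pq$, whereas the paper cites the identity and carries out explicitly the chain-uniform bookkeeping you flag as the decisive difficulty (the cancellation of $\sum_n s_n$ between the Beauzamy factors and the factorial ratio relating $\left\Vert f_{B_{-1}}\right\Vert_a$ to the degree-$M$ normalization).
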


Throughout this paper, given the homogeneous polynomial $P_k$ we will use the abbreviation $T:=T_{P_{k}}$.

Note that no statement is made regarding the possible uniqueness of the entire functions $q$ and $r$. However, we shall see that when $k=1$, the conjecture is true and furthermore we do have uniqueness, even though the Khavinson-Shapiro bounds do not hold, cf. Theorem \ref{linealconj} below.
When the dimension is $d = 1$, it follows directly from polynomial interpolation that
the result also holds, cf. Theorem \ref{dim1}: 
for every entire function $f:\C\to \C$,
there exist unique entire functions $q$ and $r$ such that
\[
f=\left(  P_{k}-P_{\beta}- \cdots  -P_{0}\right)  q+r\text{ and }P_{k}^{\ast
}\left(  D\right)  r=0.
\]
Thus, in the proof of Theorem \ref{main} above, it can always be assumed that $k, d \ge 2$.

Finally let us mention that 
conditions of the type  (\ref{KhaSha}) 
 had already been used in the work of P. Ebenfelt and
H.S. Shapiro about a generalized Cauchy-Kowaleskaya theorem in \cite{EbSh95}, and the
mixed Cauchy problem for differential equations in \cite{EbSh96}, see also \cite{EbRe08}, \cite{EbRe08b}. Let  $P$ be a polynomial of degree $k,$ written as a sum of homogeneous
polynomials
\[
P=P_{k}+P_{k-1}+\cdots+P_{0}%
\]
with leading coefficient $P_{k}$, and let the floor function $\lfloor  x\rfloor$ be the largest integer $\leq x$. Theorem 3.1.1 in [6] (see also remark on p. 259 in [6]) implies that for each entire function $f$, there
exist entire functions $q$ and $r$ such that
\begin{equation}
	f=P_{k}q+r\text{ and }P^{\ast}\left(  D\right)  r=0\label{eqdual},
\end{equation}
provided, first, that there are constants $C>0$ and $\tau\geq0$ such
that
\begin{equation}
	\left\Vert P_{k}f_{m}\right\Vert \geq Cm^{\tau/2}\left\Vert f_{m}\right\Vert
	_{a}\label{eqtau1},
\end{equation}
and second,  that $P_{j}=0$  for all $j$ with
\[
\left\lfloor  \frac{k+\tau}{2}\right\rfloor  <j<k.
\]
 Note that
(\ref{eqdual}) can be seen as the dual problem to the one formulated in (\ref{dual}).

\section{Basic properties of the apolar inner product}

The variant of the Khavinson-Shapiro result obtained here also employs   the apolar inner product (introduced during the XIX century within the theory of invariants) also known as  Fischer's inner product, and, with different normalizations, as Bombieri's inner product (more details can be found in \cite{Rend08}).  For easy reference we include some known information about it.

Denote the natural numbers by $\N_0$ (emphasizing the fact that they include zero) and recall
 the standard notation for multi-indices: given $\alpha=\left(  \alpha
_{1},\dots ,\alpha_{d}\right)  \in\mathbb{N}_{0}^{d}
$, we write   $z^{\alpha
}=z_{1}^{\alpha_{1}}\cdots  z_{d}^{\alpha_{d}},$  $\alpha!=\alpha_{1}
! \cdots \alpha_{d}!$, and $\left\vert \alpha\right\vert =\alpha_{1}+\cdots
+\alpha_{d}$.

\vskip .2 cm
Let $P$ and $Q$ be polynomials of degree $N$ and $M$, respectively given by
\[
P\left(  z \right)  =\sum_{\alpha\in\mathbb{N}_{0}^{d},\left\vert
\alpha\right\vert \leq N}c_{\alpha}z ^{\alpha}\text{ and }Q\left(  z\right)
=\sum_{\alpha\in\mathbb{N}_{0}^{d},\left\vert \alpha\right\vert \leq
M}d_{\alpha}z^{\alpha}.
\]
The \emph{apolar inner product} $\langle  \cdot,\cdot\rangle_{a}$ on
$\mathcal{P}\left(  \mathbb{C}^{d}\right)  $ is defined by
\begin{equation}
\left\langle P,Q\right\rangle_{a} :=\left[  Q^*\left(
D\right)  P\right]  \;(0)=\sum_{\alpha\in\mathbb{N}_{0}^{d}}\alpha!c_{\alpha
}\overline{d_{\alpha}}\label{eqQPD},
\end{equation}
and the associated {\em apolar norm}, by
$
\left\Vert f\right\Vert _{a}=\sqrt{\left\langle f,f\right\rangle_{a}}.
$
Note that for the constant polynomial $1$ one has
\begin{equation}
\left\langle P,1\right\rangle_{a}=P\left(  0\right)  .\label{eqFischer2}
\end{equation}
A fundamental property of the apolar inner product, is that the adjoint operator of
the multiplication operator $M_{Q}\left(  g\right)  =Qg$ is the differential
operator associated to the polynomial $Q^{\ast}.$

For the reader's convenience we include the short proof of the following known result.

\begin{proposition}
The following formulae hold for polynomials $P,Q$ and $f,g$:
\begin{equation}
\left\langle P,Q\right\rangle_{a}
=
[Q^{\ast}\left(  D\right)  P ] \left(  0\right)
=
\left\langle Q^{\ast}\left(  D\right)
P,1\right\rangle_{a}.
\label{eqFischerid}
\end{equation}
and
\begin{equation}
\left\langle Q^{\ast}\left(  D\right)  f,g\right\rangle_{a}=\left\langle
f,Q\cdot g\right\rangle_{a} 
\label{eqFischeradjoint}.
\end{equation}

\end{proposition}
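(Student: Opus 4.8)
The plan is to handle the two displayed formulas separately, since the first is essentially a restatement of the definition while the second carries the actual content.

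For (\ref{eqFischerid}), the first equality $\langle P,Q\rangle_{a}=[Q^{\ast}(D)P](0)$ is exactly the defining formula (\ref{eqQPD}). For the second equality I would simply apply the evaluation identity (\ref{eqFischer2}), namely $\langle R,1\rangle_{a}=R(0)$, to the particular polynomial $R:=Q^{\ast}(D)P$; this yields $\langle Q^{\ast}(D)P,1\rangle_{a}=[Q^{\ast}(D)P](0)$, and chaining it with the first equality finishes the line. Nothing beyond substitution is needed here.

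For the adjoint relation (\ref{eqFischeradjoint}) the cleanest route is purely algebraic, resting on the fact that the assignment $R\mapsto R(D)$ is an algebra homomorphism from $\left(\mathcal{P}(\mathbb{C}^{d}),\cdot\right)$ into the commutative algebra of constant-coefficient differential operators under composition; that is, $(RS)(D)=R(D)S(D)$, which one checks on monomials through $z^{\alpha}z^{\beta}=z^{\alpha+\beta}\mapsto D^{\alpha}D^{\beta}=D^{\alpha+\beta}$ and extends by linearity. First I would unwind the left-hand side with the definition, writing
\[
\langle Q^{\ast}(D)f,g\rangle_{a}=\bigl[g^{\ast}(D)\,Q^{\ast}(D)f\bigr](0)=\bigl[(g^{\ast}Q^{\ast})(D)\,f\bigr](0),
\]
the last step being the homomorphism property. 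For the right-hand side I would use that coefficient conjugation is multiplicative, $(Qg)^{\ast}=Q^{\ast}g^{\ast}$, so that
\[
\langle f,Qg\rangle_{a}=\bigl[(Qg)^{\ast}(D)\,f\bigr](0)=\bigl[(Q^{\ast}g^{\ast})(D)\,f\bigr](0).
\]
Since polynomial multiplication is commutative, $g^{\ast}Q^{\ast}=Q^{\ast}g^{\ast}$, and the two expressions coincide, proving (\ref{eqFischeradjoint}).

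As an alternative, more computational route I would reduce (\ref{eqFischeradjoint}) to the case of monomials $f=z^{\alpha}$, $g=z^{\beta}$, $Q=z^{\gamma}$ by sesquilinearity, noting that the inner product is conjugate-linear in its second slot, which matches the conjugation introduced by $Q\mapsto Q^{\ast}$, so both sides scale identically in each argument. Using $D^{\gamma}z^{\alpha}=\tfrac{\alpha!}{(\alpha-\gamma)!}z^{\alpha-\gamma}$ when $\gamma\le\alpha$ componentwise and $0$ otherwise, together with the orthogonality $\langle z^{\mu},z^{\nu}\rangle_{a}=\mu!\,\delta_{\mu\nu}$ read off from (\ref{eqQPD}), both sides collapse to $\alpha!\,\delta_{\alpha,\,\beta+\gamma}$. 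I do not expect a genuine obstacle along either route; the only point that demands care is the bookkeeping of conjugations and the verification that $Q^{\ast}(D)$ and $g^{\ast}(D)$ commute, which is automatic since both operators have constant coefficients.
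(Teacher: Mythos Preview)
Your proof is correct and follows essentially the same approach as the paper: both reduce (\ref{eqFischeradjoint}) via the identity (\ref{eqFischerid}) to the commutativity $g^{\ast}(D)Q^{\ast}(D)=Q^{\ast}(D)g^{\ast}(D)$ of constant-coefficient operators, with the only cosmetic difference that the paper writes $\langle\,\cdot\,,1\rangle_{a}$ where you evaluate at $0$. Your alternative monomial computation is also valid but not used in the paper.
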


\begin{proof}
Regarding  (\ref{eqFischerid}), the first equality is the definition and the second follows from (\ref{eqFischer2}). From (\ref{eqFischerid}) we conclude that
\begin{align*}
\left\langle Q^{\ast}\left(  D\right)  f,g\right\rangle_{a}  &  =\left\langle
g^{\ast}\left(  D\right)  Q^{\ast}\left(  D\right)  f,1\right\rangle_{a},  \mbox{ \  and }
\\
\left\langle f,Q\cdot g\right\rangle_{a}  &  =\left\langle Q^{\ast}\left(
D\right)  g^{\ast}\left(  D\right)  f,1\right\rangle_{a}.
\end{align*}
Since $g^{\ast}\left(  D\right)  Q^{\ast}\left(  D\right)  =Q^{\ast}\left(
D\right)  g^{\ast}\left(  D\right)  $  equation
(\ref{eqFischeradjoint}) follows.
\end{proof}
\vskip .2 cm
\vskip .2 cm

It is shown in \cite[Theorem 38]{Rend08} that if $(P_k,Q)$ is a Fischer pair, where $P_k$ is the principal part
of $P$ and $Q$ is homogeneous, then $(P,Q)$ is a Fischer pair. However, from the fact that  $(P,Q)$ is a Fischer pair we cannot conclude that $(P_k,Q)$ is a Fischer pair, as the following example from \cite{MeSt85} shows (cf.  also
\cite[Example 34]{Rend08}): take $n = 2$, $P(z_1, z_2) = z_1 - z_2^2$ and $Q(z_1, z_2) = z_1$. Then it can be checked that $P$ and $Q$ form a Fischer pair, while $P_k(z_1,z_2) = - z^2$ and $Q$
do not, since they are homogeneous of different degrees. This is impossible by \cite[Theorem 36]{Rend08}.

\section{Apolar norms of products of linear factors}
In addition to $\partial /\partial z_j$, we also use $D_j$ and $\partial_j$ to denote the $j$-th partial derivative.
The results in this section will be used later, when comparing amenability to the Khavinson-Shapiro bounds.
For $a,b\in\mathbb{C}^{d}$ we denote by $\left\langle a,b\right\rangle
$ the standard inner product
\[
\left\langle a,b\right\rangle =a_{1}\overline{b_{1}}+\cdots+a_{d}
\overline{b_{d}} = \overline{\langle b, \overline{a}\rangle}.
\]
Given $b, z \in\mathbb{C}^{d}$, consider the lineal
polynomial
\[
Q_b\left(  z\right)  :=\left\langle z, \overline{b}\right\rangle =b_{1}z_{1}+\cdots
+b_{d}z_{d},
\]
and write
\[
L_{\overline{b}}  = Q_b^{\ast}\left(  D\right)  =\overline{b_{1}}\partial_1+\cdots+\overline{b_{d}}\partial_{d}.
\]
It is easy to see that for any pair of differentiable functions $f\left(
z\right)  ,g\left(  z \right)  $,
\[
L_{\overline{b}}\left(  fg\right)  =
{\displaystyle\sum_{j=1}^{n}}
\overline{b_{j}}\partial_{j}\left(  f\cdot g\right)
= (L_{\overline{b}}f )\cdot g+f\cdot L_{\overline{b}}g.
\]
In the next result we assume that the nonzero vector $c$ is orthogonal to the vectors
$a_{1},\dots ,a_{M}$  (they may be linearly dependent, or even repeating):

\begin{proposition} \label{lineal}
Assume that $a_{1},\dots ,a_{M}\in\mathbb{C}^{d}$, where $a_{k}=\left(  a_{k,1},\dots ,a_{k,d}\right)  $, and set $\sigma_{k}\left(
 z \right)  =\left\langle z, \overline{a_{k}} \right\rangle $ for $k=1,\dots ,M.$ Suppose 
there exists a vector $c\in\mathbb{C}^{d}\setminus \{0\}$ such that for $k=1,\dots ,M$,
\[
{\displaystyle\sum_{j=1}^{d}}
\overline{a_{k,j}}c_{j}=0.
\]
 Let $g$ be a univariate polynomial. Then
\[
\left\Vert \sigma_{1} (z) \cdots\sigma_{M} (z)  \cdot g\left(  \left\langle z,
\overline{c} \right\rangle \right)  \right\Vert _{a}^{2}=\left\Vert \sigma_{1} (z) 
\cdots\sigma_{M} (z) \right\Vert _{a}^{2}\left\Vert g\left(  \left\langle z,
\overline{c} \right\rangle \right)  \right\Vert _{a}^{2}.
\]
Taking $g\left(  t\right)  =t^{m}$ we see that
\[
\left\Vert \sigma_{1} (z) \cdots\sigma_{M} (z) \cdot\left\langle z, \overline{c} \right\rangle
^{m}\right\Vert _{a}^{2}=\left\Vert \sigma_{1} (z) \cdots\sigma_{M} (z) \right\Vert
_{a}^{2}\left\Vert \left\langle z, \overline{c} \right\rangle ^{m}\right\Vert _{a}^{2}.
\]
so the product $\sigma_{1} (z) \cdots\sigma_{M} (z) $ does not satisfy the Khavinson-Shapiro bounds.
\end{proposition}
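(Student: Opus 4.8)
The plan is to peel off the powers of $Q_c(z):=\langle z,\overline c\rangle$ one at a time by means of the adjoint formula (\ref{eqFischeradjoint}), exploiting that the first--order operator $L_{\overline c}=Q_c^{\ast}(D)$ \emph{annihilates} the product $S:=\sigma_1\cdots\sigma_M$. Indeed $L_{\overline c}$ is a derivation, and for each $i$ one has $L_{\overline c}(\sigma_i)=\sum_j\overline{c_j}\,a_{i,j}=\overline{\sum_j\overline{a_{i,j}}\,c_j}=0$ by the orthogonality hypothesis on $c$, so the product rule gives $L_{\overline c}(S)=0$; moreover $L_{\overline c}(Q_c)=\sum_j|c_j|^2$ is a nonzero constant, call it $\kappa$. (Symmetrically each $\sigma_k^{\ast}(D)=L_{\overline{a_k}}$ kills every $g(\langle z,\overline c\rangle)$ by the chain rule, but the argument below does not need this.)

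Write $g(t)=\sum_{n=0}^{N}d_nt^n$, so $g(\langle z,\overline c\rangle)=\sum_n d_nQ_c^{\,n}$. Expanding by sesquilinearity, $\|S\cdot g(\langle z,\overline c\rangle)\|_a^2=\sum_{n,n'}d_n\overline{d_{n'}}\,\langle SQ_c^{\,n},SQ_c^{\,n'}\rangle_a$ and $\|S\|_a^2\|g(\langle z,\overline c\rangle)\|_a^2=\sum_{n,n'}d_n\overline{d_{n'}}\,\|S\|_a^2\langle Q_c^{\,n},Q_c^{\,n'}\rangle_a$, so it suffices to prove, for all $n,n'\ge0$,
\[
\langle SQ_c^{\,n},\,SQ_c^{\,n'}\rangle_a=\|S\|_a^2\,\langle Q_c^{\,n},\,Q_c^{\,n'}\rangle_a .
\]
By conjugate symmetry assume $n'\le n$. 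Moving $Q_c^{\,n'}$ off the second slot with (\ref{eqFischeradjoint}) turns the left side into $\langle L_{\overline c}^{\,n'}(SQ_c^{\,n}),S\rangle_a$; since $L_{\overline c}$ is a derivation killing $S$ with $L_{\overline c}(Q_c)=\kappa$, iterated Leibniz reduces $L_{\overline c}^{\,n'}(SQ_c^{\,n})$ to $\frac{n!}{(n-n')!}\kappa^{n'}SQ_c^{\,n-n'}$. A second application of (\ref{eqFischeradjoint}), now moving $Q_c^{\,n-n'}$ off the first slot, yields $\frac{n!}{(n-n')!}\kappa^{n'}\langle S,\,L_{\overline c}^{\,n-n'}S\rangle_a$, which is $0$ for $n-n'\ge1$ and $\|S\|_a^2$ for $n=n'$. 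Hence $\langle SQ_c^{\,n},SQ_c^{\,n'}\rangle_a=\delta_{nn'}\,n!\,\kappa^{\,n}\,\|S\|_a^2$; the special case $M=0$ (i.e.\ $S\equiv1$) identifies $\langle Q_c^{\,n},Q_c^{\,n'}\rangle_a=\delta_{nn'}\,n!\,\kappa^{\,n}$, and the two combine to give the displayed identity. This proves the first formula, and $g(t)=t^m$ gives the second.

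For the closing assertion, since $c\ne0$ the polynomial $\langle z,\overline c\rangle^m$ is a nonzero homogeneous polynomial of degree $m$ with $\|\langle z,\overline c\rangle^m\|_a=\sqrt{m!}\,\kappa^{m/2}>0$, and the identity just proved gives $\|S\cdot\langle z,\overline c\rangle^m\|_a=\|S\|_a\,\|\langle z,\overline c\rangle^m\|_a$ — no factor growing in $m$. Thus no inequality $\|Sf_m\|_a\ge Cm^{1/2}\|S\|_a\|f_m\|_a$ valid for all homogeneous $f_m$ of degree $m$ with $C>0$ independent of $m$ can hold, i.e.\ $S=\sigma_1\cdots\sigma_M$ violates the Khavinson--Shapiro bounds (\ref{KhaSha}).

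I do not expect a real obstacle: this is a direct computation. The points needing care are the bookkeeping in the two uses of (\ref{eqFischeradjoint}) — which polynomial is replaced by which conjugate-coefficient operator, and on which side of $\langle\cdot,\cdot\rangle_a$ — and the remark that $L_{\overline c}$ is a constant-coefficient operator, hence commutes with itself and satisfies $(Q_c^{\,n})^{\ast}(D)=L_{\overline c}^{\,n}$, which legitimizes the iterated adjoint and Leibniz manipulations. The one genuine idea is to peel off powers of $Q_c$ rather than the factors $\sigma_k$: because $L_{\overline c}(\sigma_i)=0$ for \emph{every} $i$ we get $L_{\overline c}(S)=0$ outright, whereas peeling off a $\sigma_k$ would leave the unwieldy remainder $L_{\overline{a_k}}(S)=\sum_i\langle a_i,a_k\rangle\prod_{j\ne i}\sigma_j$. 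No change of variables or unitary invariance of the apolar inner product is required.
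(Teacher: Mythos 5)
Your proof is correct, but it runs the argument in the direction dual to the paper's. The paper moves the whole product $S=\sigma_1\cdots\sigma_M$ out of one slot of the inner product at once: by (\ref{eqFischeradjoint}), $\|S\,g(\langle z,\overline c\rangle)\|_a^2=\langle L_{\overline{a_1}}\cdots L_{\overline{a_M}}\bigl(S\,g(\langle z,\overline c\rangle)\bigr),\,g(\langle z,\overline c\rangle)\rangle_a$, and since each $L_{\overline{a_k}}$ annihilates $g(\langle z,\overline c\rangle)$ (the orthogonality hypothesis read in the other direction from yours), the Leibniz rule gives $L_{\overline{a_1}}\cdots L_{\overline{a_M}}\bigl(S\,g\bigr)=\bigl(L_{\overline{a_1}}\cdots L_{\overline{a_M}}S\bigr)\cdot g=\|S\|_a^2\,g$, and the identity drops out in one stroke, with no expansion of $g$ into monomials. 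Your worry that peeling off the $\sigma_k$ leaves the ``unwieldy remainder'' $L_{\overline{a_k}}(S)$ is a red herring: the paper never needs that remainder explicitly, only the fact that after all $M$ applications one lands on the constant $\bigl[S^{\ast}(D)S\bigr](0)=\|S\|_a^2$. Your route --- expand $g$, peel off powers of $Q_c$ using that $L_{\overline c}$ kills $S$ and sends $Q_c$ to $\kappa=|c|^2$, and assemble the orthogonality relations $\langle SQ_c^{\,n},SQ_c^{\,n'}\rangle_a=\delta_{nn'}\,n!\,\kappa^n\|S\|_a^2$ --- is equally valid and yields as a by-product the explicit value $\|Q_c^{\,m}\|_a^2=m!\,\kappa^m$, at the cost of the Leibniz iteration and the double sum over coefficients of $g$. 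The closing deduction that the Khavinson--Shapiro bounds fail is handled the same way in both arguments.
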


\begin{proof} If $a =\left(  a_{1},\dots ,a_{d}\right)  $ and $c$ are orthogonal, so
 $\langle a, c \rangle = 0$, then 
 \[
L_{\overline{a}}\left(  g\left(  \left\langle z, \overline{c} \right\rangle \right)
\right)  =
{\displaystyle\sum_{j=1}^{d}}
\overline{a_{j}} \partial_{j}\left(  g\left(  \left\langle z, \overline{c} \right\rangle \right)  \right)  =g^{\prime}\left( \left\langle z, \overline{c} \right\rangle\right)
{\displaystyle\sum_{j=1}^{d}}
\overline{a_{j}}c_{j}=0.
\]
 It
follows that 
\[
L_{\overline{a}}\left(  f\cdot g\left(  \left\langle z, \overline{c} \right\rangle \right)
\right)  =L_{\overline{a}}f\cdot g\left(  \left\langle z, \overline{c} \right\rangle
\right)  +f\cdot L_{\overline{a}}\left(  g\left(  \left\langle z, \overline{c} \right\rangle \right)  \right)  =L_{\overline{a}}f\cdot g\left(
\left\langle z, \overline{c} \right\rangle \right).
\]
Since $\sigma_{k}\left(
 c \right)  =\left\langle c , \overline{a_{k}}\right\rangle = 0$ for $k=1,\dots ,M$, we have
\[
L_{\overline{a_{1}}}\cdots L_{\overline{a_{M}}}\left(   \sigma_{1} (z) \cdots\sigma_{M} (z)  \cdot g\left(  \left\langle z, \overline{c} \right\rangle \right)  \right)
=L_{\overline{a_{1}}}\cdots L_{\overline{a_{M}}}\left(   \sigma_{1} (z) \cdots\sigma_{M} (z) \right)  \cdot g\left(  \left\langle z, \overline{c} \right\rangle \right).
\]
Furthermore, we know that
\begin{align*}
\left\Vert  \sigma_{1} (z) \cdots\sigma_{M} (z)  \cdot g\left(  \left\langle z, \overline{c} \right\rangle \right)  \right\Vert _{a}^{2}  &  =\left\langle
L_{\overline{a_{1}}}\cdots L_{\overline{a_{M}}}\left(  \sigma_{1} (z) \cdots\sigma_{M} (z) \cdot g\left(  \left\langle z, \overline{c} \right\rangle \right)  \right)
,g\left( \left\langle z, \overline{c} \right\rangle \right)  \right\rangle _{a}\\
&  =L_{\overline{a_{1}}}\cdots L_{\overline{a_{M}}}\left(   \sigma_{1} (z) \cdots\sigma_{M} (z) \right)  \left\Vert g\left( \left\langle z, \overline{c} \right\rangle
\right)  \right\Vert _{a}^{2}\\
&  =\left\Vert  \sigma_{1} (z) \cdots\sigma_{M} (z) \right\Vert _{a}^{2}\left\Vert
g\left( \left\langle z, \overline{c} \right\rangle \right)  \right\Vert _{a}^{2}.
\end{align*}
\end{proof}

\section{Observations on amenability and the  bounds of Khavinson and Shapiro}

Let $P\in \C[z_1, \dots , z_d]$. We say that $P$ is independent of $z_j$ if $\partial /\partial z_j P = 0$.  If for
no $j$ this holds, then we say that $P$ depends on all the variables. Obviously if $P_k$ is amenable then it depends on all the variables. The converse is not true when $d >1$: for $k= d = 2$, just consider $P_2 (z_1, z_2) = z_1^2 + z_1 z_2 + z_2^2$;
the only possibilities for $\alpha$ are $(1,0)$ and $(0,1)$; clearly, neither of them satisfy the amenability condition. This example also shows that a symmetric polynomial (i.e., invariant under permutations of the variables) may fail to be amenable.

\vskip .2 cm

Next we  explore some cases of homogeneous polynomials that satisfy, or fail to satisfy, the Khavinson-Shapiro bounds (\ref{KhaSha}).  Formula (\ref{reznick}) below, where the sum is taken over all multi-indices with non-negative entries, appears in \cite[p. 523]{Shap89} (an earlier, and more general statement applying to certain entire functions that include the polynomials, can be found in \cite[Theorem 3]{NeSh66}; in \cite{Zeil} formula (\ref{reznick}) is called Reznick identity). Note that when $|\alpha | > k$ the corresponding terms are zero, while summing only over the multi-indices with $|\alpha | = k$ yields Bombieri's inequality. Refinements of Bombieri's inequality can be obtained by estimating  other terms in the sum
\begin{equation} \label{reznick}
\|P_k f_m \|_a^2 = \sum_\alpha  \| \partial^\alpha P_k^*(D) f_m \|_a^2/\alpha!,
\end{equation}
as we do next in some special cases. 
Suppose the analogous statement to the Khavinson-Shapiro bounds holds for some positive integer $\tau$ not necessarily 
equal to one, i.e., for every homogeneous polynomial  $f_m$  of degree $m$,  there is a  constant $C > 0$, independent of $m$, such that
\begin{equation}
\label{alphaKhaSha}
\|P f\|_a \ge C m^{\tau/2}  \|f_m\|_a.
\end{equation}
Then we call $\tau$ a {\em Khavinson-Shapiro exponent}, and the largest such $\tau$ the {\em optimal Khavinson-Shapiro exponent}.

\begin{theorem} \label{KhaShabounds}
	Let $P_{k}: \C^d \to\C$ be a homogeneous
	polynomial of degree $k > 0$. Then:
	
	\vskip .2 cm

	1) When the dimension $d = 1$,  then the optimal Khavinson-Shapiro exponent is always equal to $k$.
	
	2) When the degree $k = 1$ and the dimension $d > 1$, no homogeneous polynomial $P_1$ satisfies the Khavinson-Shapiro bounds. 
	For $k > 1$, the optimal  Khavinson-Shapiro exponent is bounded by $k-1$. 
	
	3) When all the exponents of the variables are even (and in particular, so is the total degree $k$), the homogeneous polynomial $P_k$ is amenable, and hence it satisfies the Khavinson-Shapiro bounds. 
			
\end{theorem}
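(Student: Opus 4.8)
The plan is to treat the three parts in increasing order of difficulty, extracting as much as possible from the Reznick identity (\ref{reznick}) and from Proposition \ref{lineal}.

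\emph{Part 1 ($d=1$).} Here a homogeneous polynomial of degree $k$ is simply $P_k(z) = c\, z^k$ with $c \ne 0$, and every homogeneous $f_m$ is $f_m(z) = e\, z^m$. A direct computation with the apolar norm gives $\|z^n\|_a^2 = n!$, so $\|P_k f_m\|_a^2 = |c|^2 |e|^2 (m+k)!$ and $\|f_m\|_a^2 = |e|^2\, m!$. Thus $\|P_k f_m\|_a / \|f_m\|_a = |c|\,\sqrt{(m+k)!/m!} = |c|\sqrt{(m+1)(m+2)\cdots(m+k)}$, which is comparable to $m^{k/2}$ as $m \to \infty$; hence $\tau = k$ works and no larger exponent does. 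First I would record the elementary identity $\|z^n\|_a^2 = n!$ and then this is immediate.

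\emph{Part 2 ($k=1$, $d>1$; and the bound $\tau \le k-1$ for $k>1$).} For the first assertion, a homogeneous $P_1 = \langle z, \overline{b}\rangle$ with $b \ne 0$ admits, since $d > 1$, a nonzero vector $c$ orthogonal to $b$; applying Proposition \ref{lineal} with $M=1$, $\sigma_1 = P_1$ shows $\|P_1 \langle z,\overline c\rangle^m\|_a^2 = \|P_1\|_a^2\,\|\langle z,\overline c\rangle^m\|_a^2$, so along the sequence $f_m = \langle z,\overline c\rangle^m$ the ratio $\|P_1 f_m\|_a/\|f_m\|_a$ is the constant $\|P_1\|_a$, precluding any positive Khavinson-Shapiro exponent. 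For the bound $\tau \le k-1$ when $k > 1$, I would argue that in the Reznick identity (\ref{reznick}) the terms with $|\alpha| = k$ contribute exactly $\|P_k^*(D)\cdot\|$-type terms summing to $\|P_k\|_a^2\|f_m\|_a^2$ (Bombieri), and the remaining terms come from derivatives $\partial^\alpha P_k^*$ of order $< k$, i.e. operators $\partial^\alpha P_k^*(D)$ of order exactly $k - |\alpha| \ge 1$; each such operator, applied to $f_m$, cannot increase the apolar norm by more than a factor comparable to $m^{(k-|\alpha|)/2}$ (since lowering degree by $\ell$ multiplies the apolar norm of a monomial by at most $\sqrt{m(m-1)\cdots(m-\ell+1)} \le m^{\ell/2}$). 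The dominant non-Bombieri contribution thus grows at most like $m^{k-1}$, so $\|P_k f_m\|_a^2 = O(m^{k-1})\|f_m\|_a^2$ and $\tau \le k-1$. The technical point to nail down is the uniform bound $\|\partial^\alpha P_k^*(D) f_m\|_a \le C_\alpha\, m^{(k-|\alpha|)/2}\|f_m\|_a$, which follows by expanding $f_m$ in the (apolar-orthogonal) monomial basis and bounding the action of a constant-coefficient differential operator of order $k-|\alpha|$.

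\emph{Part 3 (all exponents even).} Here I would verify the amenability condition directly. Write $P_k = \sum c_\beta z^\beta$ over multi-indices $\beta$ with every $\beta_i$ even and $|\beta| = k$; since $P_k \ne 0$ there is such a $\beta$ with $c_\beta \ne 0$. Fix $j \in \{1,\dots,d\}$; I want a multi-index $\alpha$ with $|\alpha| = k-1$ so that $D^\alpha P_k$ is a nonzero constant times $z_j$. The natural candidate is $\alpha$ obtained from some monomial $z^\beta$ (with $\beta_j \ge 1$, hence $\beta_j \ge 2$ by evenness — one must first check such a $\beta$ exists, which it does because $P_k$, having only even exponents and being nonzero, depends on every variable... actually this needs care: a sum of even monomials need not depend on $z_j$). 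The cleanest route: observe that $\langle P_k, \sigma \rangle_a$ computations or, more simply, that for each $j$ the polynomial $\partial_j P_k$ is not identically zero unless $P_k$ is independent of $z_j$; if $P_k$ genuinely depends on all variables pick $\beta$ with $c_\beta \ne 0$, $\beta_j \ge 2$, and set $\alpha = \beta - 2e_j + \sum_{i\ne j}\beta_i e_i$ adjusted so $|\alpha| = k-1$ — concretely $\alpha = \beta - e_j$, and then $D^\alpha z^\beta = (\text{const})\, z_j$; the key is that \emph{no other} monomial $z^{\beta'}$ in $P_k$ can also satisfy $D^\alpha z^{\beta'} \ne 0$, because $D^\alpha z^{\beta'} \ne 0$ forces $\beta' \ge \alpha = \beta - e_j$ componentwise with $|\beta'| = |\beta|$, leaving only $\beta' = \beta$ or $\beta' = \beta - e_j + e_i$; the latter has $\beta'_i = \beta_i + 1$ odd, contradicting the even-exponent hypothesis. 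Hence $D^\alpha P_k = c_\beta\,(\text{const})\,z_j$, a nonzero multiple of $z_j$, so $P_k$ is amenable; the Khavinson-Shapiro bounds then follow from \cite[Lemma 11]{KhSh92}, i.e. (\ref{KhaSha}).

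\emph{Main obstacle.} The genuinely delicate point is Part 3: handling the case where a polynomial with only even exponents fails to depend on some variable $z_j$ (e.g.\ $P_k = z_1^4$ on $\C^2$ with $j=2$), since then no $\alpha$ of length $k-1$ gives a nonzero multiple of $z_j$ and the polynomial is \emph{not} amenable. I expect the theorem as stated implicitly assumes $P_k$ depends on all variables, or the statement should be read as: the even-exponent condition forces amenability among those variables on which $P_k$ depends, and one reduces to a lower-dimensional space where $P_k$ depends on all variables — in that reduced space Bombieri-type computations are unaffected, so the Khavinson-Shapiro bounds transfer. I would flag this reduction explicitly and carry out the combinatorial argument above in the full-support case.
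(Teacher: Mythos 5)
Your proposal is correct and, for parts 1) and 3), follows essentially the same route as the paper; part 2) differs in one sub-argument. For part 1) the paper makes exactly your computation $\|P_k f_m\|_a^2=|a|^2\frac{(k+m)!}{m!}\|f_m\|_a^2$. For the first claim of part 2) the paper does not invoke Proposition \ref{lineal}: it observes that $P_1^*(D)$ maps $\mathcal{P}_m(\C^d)$ into the lower-dimensional space $\mathcal{P}_{m-1}(\C^d)$, hence has nontrivial kernel, and then reads off $\|P_1 f_m\|_a^2=\|P_1\|_a^2\|f_m\|_a^2$ from (\ref{reznick}) for any $f_m$ in that kernel. Your explicit witness $f_m=\langle z,\overline c\rangle^m$ with $c\perp b$ is a concrete instance of the same kernel element and is equally valid; the dimension count is marginally more robust since it also handles the $k>1$ claim verbatim (a nonzero $f_m$ with $P_k^*(D)f_m=0$ kills the $\alpha=0$ term of (\ref{reznick})). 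For that $k>1$ claim you correctly identify, and the paper leaves implicit, the needed estimate $\|(\partial^\alpha P_k^*)(D)f_m\|_a\le C_\alpha m^{(k-|\alpha|)/2}\|f_m\|_a$, which follows by iterating $\sum_j\|\partial_j f_m\|_a^2=m\|f_m\|_a^2$; filling this in is a genuine (small) improvement in rigor. For part 3) your combinatorial argument (any $\beta$ with $c_\beta\ne 0$, $\beta_j\ge 2$, $\alpha=\beta-e_j$, with evenness excluding every competing monomial $\beta-e_j+e_i$) is a correct variant of the paper's, which instead takes $\beta_j$ maximal; both use evenness in the same essential way.

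Your flagged obstacle in part 3) is real and you should not try to repair it by passing to a lower-dimensional space: if $P_k$ is independent of some variable $z_j$ (e.g.\ $P_k=z_1^2$ on $\C^2$, all exponents even), then it is not amenable, and moreover the Khavinson--Shapiro bounds (\ref{KhaSha}) genuinely fail in $\C^d$, since taking $f_m=z_j^m$ gives $\|P_k f_m\|_a^2=\|P_k\|_a^2\|f_m\|_a^2$ with no growth in $m$ (a special case of Proposition \ref{lineal} with $c=e_j$). So the bounds do not ``transfer'' from the reduced space, and the statement must be read with the implicit hypothesis that every variable actually occurs (with positive even exponent) in $P_k$; under that reading your argument, like the paper's, is complete.
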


\begin{proof} When $d = 1$, writing $P_k(z) = a z^k$ and  $f_m(x) = b z^m$ with $a, b \ne 0$, 
we have 
\begin{equation} \label{1d}
\|P_k f_m \|_a^2 = | a |^2 \frac{(k + m) !}{m!}   \|f_m\|_a^2. 
\end{equation}
For 2), suppose that 
$$
P_1\left(  z\right)  = \sum_{1 \le i \le d}c_{i} z_{i},
$$
 where some but not all the coefficients $c_i$ might be $0$, and let $m > 1$. Since $P_1(D)$ maps the homogeneous polynomials of degree $m$ into those of degree $m - 1$, and the latter space has a smaller dimension (because $d > 1$)
 it follows that there is a homogeneous polynomial $f_m \ne 0$ of degree $m$ such that $P_1(D) f_m = 0$.
 By the Newman-Shapiro identity (\ref{reznick}), a.k.a. Reznick identity, with $|\alpha | = 1$, 
 $$
\|P_1 f_m \|_a^2 
= 
 \|P_1 \|_a^2 \|f_m\|_a^2 +   \|P_1(D) f_m \|_a^2 =  \|f_m\|_a^2 \|P_1 \|_a^2. 
$$ 
For $k > 1$,  the same argument shows that there is a homogeneous polynomial 
$f_m \ne 0$ of degree $m$ such that $P_k(D) f_m = 0$, so the term in (\ref{reznick}) corresponding to the multi-index $\alpha = 0$ vanishes, and
hence the optimal Khavinson-Shapiro exponent is bounded by $k-1$.

Next we check the assertion in 3). For each 
$j\in  \{1,\dots, d\}$ choose a monomial $c_\alpha z^\alpha$ with maximal $\alpha_j$, and let $\alpha^\prime$ be obtained from $\alpha$ by replacing $\alpha_j$ with $\alpha_j - 1$, so $|\alpha^\prime| = k -1$. Then for some constat $c \ne 0$ we have $D^{\alpha^\prime} c_\alpha z^\alpha = c z_j$.
If $c_\beta z^\beta$ is another  monomial with $\beta_j < \alpha_j$, so  $\beta_j \le \alpha_j -2$, then $D^{\alpha^\prime} c_\beta z^\beta = 0$, while if
 $\beta_j = \alpha_j$ then for some $i \ne j$, $\beta_i < \alpha_i$, so again $D^{\alpha^\prime} c_\beta z^\beta = 0$. It follows that $D^{\alpha^\prime} P_k$ is a non-zero constant times $z_j$.
\end{proof}

\vskip .2 cm

Note that when the remainder $r = 0$, we have $f_m = P_k Tf_m$,  so by Beauzamy's inequality (Lemma \ref{lemmaD} below) we always have $\tau \le k$ in (\ref{eqT}).

\begin{example} By Lemma \ref{lemmaD} below, for every $f_m$ we  have 
$$
\|P_k f_m \|_a^2 
\le 
C m^{k} \|f_m\|_a^2,
$$ 
so $\tau \le k$, and by part 2) of the preceding theorem, if $d > 1$ it is always possible to find an $f_m$ for which  $\tau \le k - 1$. The condition 
\begin{equation}\label{kTf}
\left\Vert Tf_{m}\right\Vert _{a}\leq\frac{C}{m^{\frac{\tau}{2}}}\left\Vert
f_{m}\right\Vert _{a}
\end{equation}
used in Theorem \ref{main} can be much weaker when applied to specific functions. In the extreme case $ Tf_{m} = 0$ obviously every
positive $\tau$ will work. For a less extreme example, take $m, \ell \gg k$,  and let $P_{k} \in \C[z_1 , z_d]$ be $z_1^k$.
Choosing $f_m (z_1, z_2) := z_1^k z_2^{m-k} + m^\ell z_2^{m}$, it is clear that we can take $\tau = \ell$. 
But as noted above, when the remainder $r = 0$ we have $f_m = P_k Tf_m$, so since $\tau$ is required to work for all homogeneous polynomials,   $\tau \le k$ also in (\ref{kTf}), and when $d > 1$, $\tau \le k - 1$.
\end{example}

\vskip .2 cm

Next we show that if $k = d = 2$, so $P\left(  z_1 ,z_2 \right)  =a z_1^{2}+ b  z_1 z_2 + c z_2^{2}$, 
then  $P$ will satisfy the Khavinson-Shapiro bounds under the  condition $4 a b \ne b^2$, which is distinctly more general than amenability. It is easy to check
 that for $k = d = 2$, the only amenable polynomials are $Q\left(  z_1 ,z_2 \right)  = a  z_1^{2} + c z_2^{2}$ when $a, c \ne 0$ and $R\left(  z_1 ,z_2 \right)  = b z_1 z_2$ when $b \ne 0$.

\begin{theorem} \label{twotwo}
	Let $P\left(  z_1 ,z_2 \right)  =a z_1^{2}+ b  z_1 z_2 + c z_2^{2}$ be a 
	polynomial with complex coefficients $a,b,c$, not all of them 0. Then the following are equivalent:
	
	1) $P$ does not satisfy  the Khavinson-Shapiro bounds.
	
	2) $4 a c = b^2$. 
	
	3) 	$P\left(  z_1, z_2\right) =  \left(  r z_1 + s z_2\right)
	^{2}$, where the only condition  on the complex coefficients $r$ and  $s$ is that they
	cannot simultaneously be 0.

\end{theorem}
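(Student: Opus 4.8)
The plan is to establish the cycle $3)\Rightarrow 2)\Rightarrow 1)\Rightarrow 3)$, since $3)\Rightarrow 2)$ is a one-line algebraic identity ($(rz_1+sz_2)^2 = r^2 z_1^2 + 2rs\, z_1 z_2 + s^2 z_2^2$, whence $4ac = 4r^2s^2 = (2rs)^2 = b^2$), and $2)\Rightarrow 3)$ is the classical fact that a binary quadratic form with vanishing discriminant is a perfect square of a linear form (complete the square, or diagonalize). So the substance lies entirely in $3)\Rightarrow 1)$ and $1)\Rightarrow 3)$ (equivalently, $\lnot 3) \Rightarrow \lnot 1)$, i.e. if $4ac\neq b^2$ then the Khavinson--Shapiro bounds hold).

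For $3)\Rightarrow 1)$: if $P(z_1,z_2) = (rz_1+sz_2)^2 = \sigma(z)^2$ where $\sigma(z) = \langle z, \overline{a}\rangle$ with $a = (\overline{r},\overline{s})$, I would pick a nonzero $c\in\C^2$ orthogonal to $a$ (possible since $d=2 > 1$) and apply Proposition~\ref{lineal} with $M = 2$, $a_1 = a_2 = a$, and $g(t) = t^m$. That proposition yields $\|\sigma^2 \cdot \langle z,\overline{c}\rangle^m\|_a^2 = \|\sigma^2\|_a^2\,\|\langle z,\overline{c}\rangle^m\|_a^2$, so with $f_m(z) = \langle z,\overline{c}\rangle^m$ we get $\|Pf_m\|_a = \|P\|_a\,\|f_m\|_a$, a constant (no growth in $m$), which contradicts $\|Pf_m\|_a \ge Cm^{\tau/2}\|f_m\|_a$ for any $\tau > 0$; hence $P$ fails the Khavinson--Shapiro bounds.

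For $\lnot 3)\Rightarrow \lnot 1)$, i.e. $4ac\neq b^2 \Rightarrow$ the bounds hold: when the discriminant is nonzero, $P$ factors as $P(z_1,z_2) = \lambda\,\ell_1(z)\,\ell_2(z)$ with $\ell_1,\ell_2$ \emph{linearly independent} linear forms (distinct roots). After an invertible linear change of variables one may assume $\ell_1 = z_1$, $\ell_2 = z_2$, so $P$ becomes (a constant times) $z_1 z_2$ — here I would note that a linear change of coordinates transforms the apolar inner product into an equivalent one, or, to avoid checking that, argue directly that it suffices to prove the bound for $P = z_1 z_2$ up to a multiplicative constant and a comparison of norms under the substitution. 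For $P = z_1 z_2$ the operator $P^*(D) = \partial_1\partial_2$, and I would compute $\|z_1 z_2 f_m\|_a^2$ directly via the Reznick identity~(\ref{reznick}): writing $f_m = \sum_{i+j=m} c_{ij} z_1^i z_2^j$, a monomial computation gives $\|z_1 z_2 f_m\|_a^2 = \sum_{i+j=m}(i+1)(j+1)\,(i)!\,(j)!\,|c_{ij}|^2 \cdot(\text{normalization})$, and the factor $(i+1)(j+1)$ is minimized over $i+j=m$ at the endpoints, giving $(i+1)(j+1)\ge m+1$. This yields $\|z_1 z_2 f_m\|_a^2 \ge (m+1)\|f_m\|_a^2$, i.e. a Khavinson--Shapiro exponent $\tau = 1$ (consistent with part 2) of Theorem~\ref{KhaShabounds} bounding the optimal exponent by $k-1 = 1$).

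The main obstacle I anticipate is handling the linear change of variables cleanly in the $\lnot 3)\Rightarrow\lnot 1)$ direction: the apolar norm is \emph{not} invariant under arbitrary invertible linear maps, only under unitary ones, so reducing the general nondegenerate $P$ to the model $z_1 z_2$ requires either (a) tracking how $\|\cdot\|_a$ distorts under the change of variables and checking the distortion is bounded independently of $m$ (true, since it only involves the fixed linear map, not $m$), or (b) avoiding the reduction entirely and estimating $\|Pf_m\|_a^2$ for general $P = \lambda\ell_1\ell_2$ directly from~(\ref{reznick}), using that $P^*(D)f_m$ does not vanish for all $f_m$ because $\ell_1,\ell_2$ are independent (so $P$ is not a power of a single linear form, and the argument of Proposition~\ref{lineal} cannot apply). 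Option (b) is more robust and I would lean toward it: the key point to nail down is the uniform (in $m$) lower bound on the relevant term(s) of the Reznick sum when the two linear factors are independent.
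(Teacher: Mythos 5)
Your cycle's easy legs are fine: $3)\Rightarrow 2)$ and $2)\Rightarrow 3)$ are the algebra you describe, and your $3)\Rightarrow 1)$ via Proposition \ref{lineal} with $c=(s,-r)$ and $f_m(z)=(sz_1-rz_2)^m$ is exactly the paper's argument. The genuine gap is in the hard direction ($4ac\neq b^2$ implies the Khavinson--Shapiro bounds), where neither of your two options is actually carried out, and option (a) rests on a false premise. The apolar norm of a degree-$m$ homogeneous polynomial is \emph{not} distorted by an $m$-independent factor under a fixed non-unitary invertible linear map $A$: already in one variable, $f_m(z)=z^m$ and $A(z)=2z$ give $\left\Vert f_m\circ A\right\Vert_a=2^m\left\Vert f_m\right\Vert_a$, and in general the distortion is geometric in the degree (substitute $z\mapsto Az$ in Bargmann's formula (\ref{Bargman})). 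So in the chain $\left\Vert Pg_m\right\Vert_a=\left\Vert (Q h_m)\circ A\right\Vert_a$ with $Q=z_1z_2$ and $h_m=g_m\circ A^{-1}$, the two distortion factors (one in degree $m+2$, one in degree $m$) do not cancel, and the constant $C$ in $\left\Vert Pg_m\right\Vert_a\geq Cm^{1/2}\left\Vert g_m\right\Vert_a$ degrades exponentially in $m$ unless $A$ is a scalar multiple of a unitary. A unitary change of variables \emph{is} harmless, but it only normalizes two independent linear factors to $P=\alpha z_1^2+\beta z_1z_2$ with $\beta\neq 0$, not to $z_1z_2$, so your clean monomial computation $(i+1)(j+1)\geq m+1$ does not reach the case you need.

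Option (b) is the right idea, but as written it is only a statement of intent: "nail down the uniform lower bound on the relevant terms of the Reznick sum" is precisely the substance of the proof. Note also that the term you point to, $P^{\ast}(D)f_m$ (the $\alpha=0$ term of (\ref{reznick})), cannot supply the bound: since $d=2$, for every $m$ there is a nonzero $f_m$ with $P^{\ast}(D)f_m=0$ by a dimension count, exactly as in part 2) of Theorem \ref{KhaShabounds}. The paper instead uses the two $\left\vert\alpha\right\vert=1$ terms $\left\Vert(\partial_k P^{\ast})(D)f_m\right\Vert_a^2$, writes $\left\Vert\partial_1 f_m\right\Vert_a^2=tm\left\Vert f_m\right\Vert_a^2$ and $\left\Vert\partial_2 f_m\right\Vert_a^2=(1-t)m\left\Vert f_m\right\Vert_a^2$, controls the cross terms by Cauchy--Schwarz, and reduces the claim to the strict positivity on $[0,1]$ of an explicit function $f(t)$; completing a square shows this positivity is equivalent to $(4\left\vert a\right\vert^2+\left\vert b\right\vert^2)(\left\vert b\right\vert^2+4\left\vert c\right\vert^2)>4\left\vert\overline{a}b+\overline{b}c\right\vert^2$, which in turn is shown to be equivalent to $4ac\neq b^2$. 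That quantitative step is what is missing from your proposal.
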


\begin{proof} To obtain 1) $\implies$ 2), we shall prove an equivalent formulation: if $4 a b \ne b^2$, then $P$ satisfies the Khavinson-Shapiro bounds.
	
	\vskip .2cm

Given $P\left(  z_1 ,z_2 \right)  =a z_1^{2}+ b  z_1 z_2 + c z_2^{2}$,  we
have
\[
\partial_1 P^{\ast}\left(   z_1 ,z_2  \right)  =2\overline{a
}z_1+\overline{b}z_2\text{ and }\partial_2 P^{\ast}\left(
 z_1 ,z_2  \right)  =\overline{b}z_1+2\overline{c}z_2.
\]

Let 
$$
f_m (z_1,z_2) 
=
\sum_{0 \le i \le m}   c_{i} z_1^i z_2^{m - i}.
$$
A computation (alternatively, use \cite[Lemma 3]{Shap89}) shows that 
$$
m \|f_m\|_a^2 = \|\partial_1 f_m\|_a^2 +\|\partial_2 f_m\|_a^2.
$$
Let $0 \le t \le 1$ satisfy
\begin{equation} \label{proportion}
t m \|f_m\|_a^2 = \|\partial_1 f_m\|_a^2, \mbox{ \ so \ } (1 - t) m \|f_m\|_a^2 = \|\partial_2 f_m\|_a^2.
\end{equation}
It follows from (\ref{reznick}) that
\[
\left\Vert Pf_{m}\right\Vert _{a}^{2}\geq A_{m},\text{ where }A_{m}:=
{\displaystyle\sum_{k=1}^{2}}
\left\Vert \left(  \partial_k P^{\ast}\right)  \left(
D\right)  f_{m}\right\Vert _{a}^{2}.
\]
Now
\begin{align*}
\left\Vert \left(  2\overline{a}\partial_1 
+\overline{b}\partial_2\right)  f_{m}\right\Vert
_{a}^{2}  &  =4\left\vert a\right\vert ^{2}\left\Vert \partial_1 f_{m}\right\Vert _{a}^{2}+\left\vert b\right\vert
^{2}\left\Vert \partial_2  f_{m}\right\Vert _{a}^{2}\\
&  +2\overline{a}b\left\langle \partial_1 f_{m}
,\partial_2 f_{m}\right\rangle _{a}+2a\overline{b
}\left\langle \partial_2 f_{m},\partial_1
 f_{m}\right\rangle _{a},
\end{align*}
and the sum of the last two terms is
\[
4\operatorname{Re}\left(  \overline{a}b\left\langle \partial_1 f_{m},\partial_2 f_{m}\right\rangle _{a}\right).
\]
The analogous formula holds for $\left\Vert \left(  \overline{b}\partial_1  +2\overline{c}\partial_2 \right)
f_{m}\right\Vert _{a}^{2}.$ It follows that
\begin{align*}
A_{m}  &  =\left\Vert \left(  2\overline{a}\partial_1 +\overline{b}\partial_2 \right)  f_{m}\right\Vert
_{a}^{2}+\left\Vert \left(  \overline{b}\partial_1 +2\overline{c}\partial_2 \right)  f_{m}\right\Vert
_{a}^{2}\\
&  =\left(  4\left\vert a\right\vert ^{2}+\left\vert b\right\vert
^{2}\right)  \left\Vert \partial_1  f_{m}\right\Vert _{a}
^{2}+\left(  \left\vert b\right\vert ^{2}+4\left\vert c\right\vert
^{2}\right)  \left\Vert \partial_2 f_{m}\right\Vert _{a}
^{2}+\\
&  +4\operatorname{Re}\left(  \left(  \overline{a}b+\overline{b
}c\right)  \left\langle \partial_1 f_{m},  \partial_2 f_{m}\right\rangle _{a}\right)
\end{align*}
Using  $\operatorname{Re}z\geq-\left\vert z\right\vert $ and the
Cauchy-Schwarz inequality, we conclude that
\[
A_{m}\geq\left(  4\left\vert a\right\vert ^{2}+\left\vert b
\right\vert ^{2}\right)  \left\Vert \partial_1 f_{m}
\right\Vert _{a}^{2}
\]
\[+\left(  \left\vert b\right\vert ^{2}+4\left\vert
c\right\vert ^{2}\right)  \left\Vert \partial_2
f_{m}\right\Vert _{a}^{2}-4\left\vert \overline{a}b+\overline{b
}c\right\vert \left\Vert \partial_1 f_{m}\right\Vert
_{a}\left\Vert \partial_2 f_{m}\right\Vert_a.
\]
It now follows from (\ref{proportion}) that
\[
A_{m}\geq\left(  4\left\vert a\right\vert ^{2}
+\left\vert b
\right\vert ^{2}\right)  mt\left\Vert f_{m}\right\Vert _{a}^{2}
\]
\[
+\left(
\left\vert b\right\vert ^{2}+4\left\vert c\right\vert ^{2}\right)
m\left(  1-t\right)  \left\Vert f_{m}\right\Vert _{a}^{2}-4\left\vert
\overline{a}b+\overline{b}c\right\vert m\sqrt{t}\sqrt
{1-t}\left\Vert f_{m}\right\Vert _{a}^{2}.
\]
Thus $A_{m}\geq m\left\Vert f_{m}\right\Vert _{a}^{2} f\left(  t\right)  $,
where
\[
f\left(  t\right)  =\left(  4\left\vert a\right\vert ^{2}+\left\vert
b\right\vert ^{2}\right)  t+\left(  \left\vert b\right\vert
^{2}+4\left\vert c\right\vert ^{2}\right)  \left(  1-t\right)
-4\left\vert \overline{a}b+\overline{b}c\right\vert \sqrt
{t}\sqrt{1-t}.
\]
Letting $C := \min \{f(t) : t\in [0,1]\}$, since  $A_{m}\geq m\left\Vert f_{m}\right\Vert _{a}^{2}f\left(  t\right)  $, for the Khavinson-Shapiro bounds to be satisfied it is enough  that  $f$ be strictly positive on $\left[
0,1\right].$

Recall our assumption $4ab\neq b^{2}.$ If $b=0$, then $ac\neq0,$ and 
\[
f\left(  t\right)  =4\left\vert a\right\vert ^{2}t+4\left\vert
c\right\vert ^{2}\left(  1-t\right)  .
\]
Since $f$ is affine and  $f\left(
0\right)  >0$, $f\left(  1\right)  >0$, we conclude that $C > 0$.

Assume next that $b\neq0$. Then
$$
f\left(  t\right)  =\left(  \left\vert b\right\vert ^{2}+4\left\vert
c\right\vert ^{2}\right)  \left(  \left(  1-t\right)  -\frac{4\left\vert
\overline{a}b+\overline{b}c\right\vert }{\left\vert
b\right\vert ^{2}+4\left\vert c\right\vert ^{2}}\sqrt{t}\sqrt
{1-t}+\frac{4\left\vert a\right\vert ^{2}+\left\vert b\right\vert
^{2}}{\left\vert b\right\vert ^{2}+4\left\vert c\right\vert ^{2}%
}t\right) 
$$
$$
=
\left(  \left\vert b\right\vert ^{2}+4\left\vert c\right\vert
^{2}\right) \times 
$$
$$
\left(  \left(  \sqrt{1-t}-\frac{2\left\vert \overline{a
	}b+\overline{b}c\right\vert }{\left\vert b\right\vert
	^{2}+4\left\vert c\right\vert ^{2}}\sqrt{t}\right)^{2} + 
\left(\frac{4\left\vert a\right\vert ^{2}+\left\vert
	b\right\vert ^{2}}{\left\vert b \right\vert ^{2}+4\left\vert
	c \right\vert ^{2}} - \frac
{4\left\vert \overline{a}b+\overline{b} c \right\vert ^{2}
\ }{\left(  \left\vert b\right\vert ^{2}+4\left\vert c\right\vert
^{2}\right)  ^{2}}\right) t \right).
$$
Note that $f\left(  0\right)  >0.$ Thus $f\left(  t\right)  >0$ for all
$t\in\left[  0,1\right]  $, provided the numerator obtained when summing the last two fractions is strictly positive, that is,
\[
B=\left(  4\left\vert a\right\vert ^{2}+\left\vert b\right\vert
^{2}\right)  \left(  \left\vert b\right\vert ^{2}+4\left\vert
c\right\vert ^{2}\right)  -4\left\vert \overline{a}b
+\overline{b}c\right\vert ^{2} > 0.
\]
Now
\[
\left\vert \overline{a}b+\overline{b}c\right\vert
^{2}=\left(  \overline{a}b+\overline{b}c\right)  \left(
a\overline{b}+b\overline{c}\right)  =\left\vert
a\right\vert ^{2}\left\vert b\right\vert ^{2}+\overline{a}
bb\overline{c}+\overline{b}c a\overline{b
}+\left\vert b^{2}\right\vert \left\vert c^{2}\right\vert,
\]
 so
\[
B=\ 16\left\vert ac\right\vert ^{2}+\left\vert b\right\vert
^{4}- 8\operatorname{Re}\left(  \overline{a}bb\overline{c
}\right).
\]

Write $z=b^{2}$ and $w=\overline{a}\overline{c}$, with 
$z=u+iv$ and $w=x+i y.$ Then
\[
B=16\left(  z^{2}+y^{2}\right)  +u^{2}+v^{2}- 8\left(  u x-v y\right)  =\left(
4x-u\right)^{2}+\left(  4y +v\right)^{2}.
\]
Hence $B=0$ entails that $y=-v/4$ and $x=u/4,$ so
\[
\overline{a}\overline{c}=w=x+iy= \frac{1}{4}\left(
u-iv\right)  =\frac{1}{4}\overline{z}=\frac{1}{4}\overline{b}^{2}.
\]
Thus $B=0$ if and only if $4ac=b^{2}.$ Since we assume that
$b^{2}\neq4ab$ we see that $A_{m}\geq  C m\left\Vert f_{m}\right\Vert _{a}^{2}.$

For $b)\implies c)$,  write $P(z_1, z_2) = r^2 z_1^2 + b z_1 z_2 + s^2 z_2^2$, and assume that  $b^2 = 4 r^2 s^2$. Then $P(z_1, z_2) = (r z_1 + s z_2)^2$.  
Finally, for $c)\implies a)$, given the nontrivial polynomial $P(z_1, z_2) = (r z_1 + s z_2)^2$, choose the nonzero vector $c =  (s, -r)$ and let $f_m(z_1, z_2) := (s z_1   - r z_2)^m$.   Then  
the equality $
\|P f_m \|_a^2 
= 
\|P \|_a^2 \|f_m\|_a^2
$   is a special case of Proposition \ref{lineal}.
\end{proof}

\section{Special cases: when $k= 1$ or $d=1$ the conjecture is true}

In view of the fact that the Khavinson-Shapiro bounds never hold when $k=1$, this case requires separate treatment. It is noted next that
 the Khavinson-Shapiro bounds are not
needed for lineal polynomials.

\begin{theorem} \label{linealconj}
Let $P_{1} (z_1, \dots , z_d)$ be a non-zero homogeneous polynomial of degree $1$, and let $P= P_1 - P_0$. Then the Fischer operator $F$ defined by
\[
F\left(  q\right)  := P_1^{\ast}\left(  D\right)  \left[  \left(  P_1- P_0\right)
q\right]
\]
is a bijection on the set of all entire functions in $d$ variables, i.e., $P = P_1- P_0$ and $P_1^{\ast
}\left(  D\right)  $ form a Fischer pair for this space.
\end{theorem}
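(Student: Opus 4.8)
## Proof proposal for Theorem \ref{linealconj}

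The plan is to reduce to a one‑variable situation by a linear change of coordinates, and then solve the resulting functional equation explicitly. First I would normalize: since $P_1$ is a nonzero linear form, after a unitary change of variables in $\C^d$ (which preserves the apolar inner product and hence the adjointness relation (\ref{eqFischeradjoint}), and maps entire functions to entire functions) we may assume $P_1(z) = c z_1$ for some $c \neq 0$, so that $P_1^*(D) = \bar c\, \partial_1$. Writing $P_0 = b$ for a constant $b \in \C$, the operator becomes $F(q) = \bar c\,\partial_1\big[(c z_1 - b) q\big]$. Since $\bar c \neq 0$ we may drop the scalar $\bar c$, and after rescaling $z_1$ we are left with showing that $q \mapsto \partial_1\big[(z_1 - b) q\big]$ is a bijection on $E(\C^d)$; expanding, this is $q \mapsto q + (z_1 - b)\partial_1 q$.

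The key step is then to show that the operator $L(q) := q + (z_1 - b)\partial_1 q$ is invertible on entire functions. I would do this by a further shift $z_1 \mapsto z_1 + b$, reducing to $L(q) = q + z_1 \partial_1 q$, and then expand in powers of $z_1$: writing $q(z_1, z') = \sum_{n \geq 0} q_n(z') z_1^n$ with $q_n$ entire in $z' = (z_2, \dots, z_d)$, one computes $L(q) = \sum_{n \geq 0} (n+1) q_n(z') z_1^n$. Hence $L$ acts diagonally on the Taylor coefficients in $z_1$, multiplying the $n$‑th coefficient by $n+1 \neq 0$, so it is formally invertible with $L^{-1}(g) = \sum_{n\geq 0} \frac{1}{n+1} g_n(z') z_1^n$. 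Injectivity is immediate. For surjectivity I must check that $L^{-1}$ maps entire functions to entire functions: given $g$ entire, the coefficients of $L^{-1}g$ are $\frac{1}{n+1} g_n$, and since $|{\textstyle\frac{1}{n+1}}| \le 1$, a standard Cauchy‑estimate argument on the coefficients shows the resulting power series converges on all of $\C^d$ (one controls $\sup_{|z_1| \le R} |q_n(z')| \le \frac{1}{n+1}\sup_{|z_1| \le R}|g_n(z')|$ and compares radii of convergence). Equivalently, one can write $L^{-1}g$ via the explicit integral $ (L^{-1}g)(z_1, z') = \int_0^1 g(t z_1, z')\, dt$ after undoing the shift, which is manifestly entire.

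The main obstacle, though a mild one, is the surjectivity/analyticity step: one has to verify that dividing the $n$‑th Taylor coefficient by $n+1$ does not destroy the entire character of the function, i.e., that the formal inverse genuinely lands in $E(\C^d)$ rather than merely in formal power series. This is where the integral representation is cleanest, since it bypasses coefficient estimates entirely. Finally I would translate back through the chain of changes of variables (unitary map, scalar, shift), noting that each preserves both entirety and the Fischer‑pair property, and invoke Meril–Struppa's equivalence \cite[Proposition 1]{MeSt85} (as recalled in the excerpt) to conclude that $P = P_1 - P_0$ and $P_1^*(D)$ form a Fischer pair for $E(\C^d)$.
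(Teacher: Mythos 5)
Your proof is correct, but it takes a genuinely different route from the paper's. The paper reduces the statement to Shapiro's theorem \cite[Theorem 1]{Shap89} (a homogeneous $P_1$ and $P_1^{\ast}(D)$ form a Fischer pair for $E(\mathbb{C}^d)$) by a translation trick: writing $P_1(z)=\langle z,b\rangle$ with $b\neq 0$, surjectivity of this linear form gives a $z_0$ with $P_1(z_0)=P_0$, hence $P_1(z-z_0)=P_1(z)-P_0$; one applies Shapiro's theorem to $f(z+z_0)$ and translates back, uniqueness being inherited because the constant-coefficient operator $P_1^{\ast}(D)$ commutes with translations. You instead bypass Shapiro's theorem entirely: after a unitary change of variables and a shift you diagonalize the Fischer operator on the partial Taylor expansion in $z_1$ (the $n$-th coefficient is multiplied by $n+1$), exhibit the explicit inverse $g\mapsto\int_0^1 g(tz_1,z')\,dt$ (suitably translated), and pass from bijectivity of $F$ to the Fischer-pair statement via the Meril--Struppa equivalence \cite{MeSt85}. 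Your argument is more elementary and self-contained and yields an explicit inversion formula; the paper's is shorter and isolates the translation trick as the only new content once Shapiro's theorem is taken as given, and it directly produces the decomposition with uniqueness rather than going through the equivalence. Two cosmetic remarks: your intermediate Cauchy-estimate display is garbled (the coefficients $q_n$, $g_n$ do not depend on $z_1$), but this is moot since the integral representation already guarantees entirety; and no actual rescaling of $z_1$ is needed --- one merely factors the nonzero constant $c$ out of $cz_1-b$, since nonzero scalar multiples do not affect bijectivity.
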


\begin{proof}
We prove that $P_1 - P_0$ and $P_1^{\ast}\left(  D\right)  $ form a Fischer pair. By Shapiro's theorem \cite[Theorem 1]{Shap89}, $P_1$ and $P_1^{\ast}\left(  D\right)  $ form a Fischer pair for the entire functions. Since
$P_1\left(  z \right)  $ is homogeneous of degree $1$, for some $b \ne 0$ we have 
\[ 
P_1\left(  z\right)  =\left\langle z,b\right\rangle .
\]
Since $b \ne 0$, the linear funtion 
$\left\langle \cdot,b\right\rangle : \C^d \to \C$ is surjective, so
 there is a $z_{0}$ with $\left\langle z_{0}, b\right\rangle = P_0$,
and hence $P_1\left(  z-z_{0}\right)  =P_1\left(  z\right)  -  P_0.$

Let  $f$ be  entire, and set  $F\left(  z\right)  =f\left(
z + z_{0}\right)  .$  By Shapiro's theorem  there are unique entire functions $q$ and $h$ such that $P_1^{\ast}\left(  D\right)  h=0$ and 
\[
F\left(  z\right)  =P_1\left(  z\right)  q\left(  z\right)  +h\left(  z\right). 
\]
 Replace $z$ with
$z - z_{0}.$ Then
\begin{align*}
f\left(  z \right)   &  = F\left(  z -z_{0}\right)  =P_1\left(  z -z_{0}\right)
q\left(  z -z_{0}\right)  +h\left(  z-z_{0}\right) \\
&  =\left(  P_1\left(   z \right)  - P_0 \right)  q\left(  z -z_{0}\right)  +h\left(
z-z_{0}\right)  .
\end{align*}
Clearly $\widetilde{h}\left(  z \right)  : =h\left(  z - z_{0}\right)  $ satisfies
$P_1^{\ast}\left(  D\right)  \widetilde{h}=0$, since $P_1^{\ast}\left(  D\right)
h=0.$ Uniqueness of $\widetilde{h}\left(  z \right)$ and $\widetilde{q}\left(  z \right)  : =q\left(  z - z_{0}\right)  $
follows from the corresponding uniqueness statements for $h$ and $q$.
\end{proof}

\vskip .3 cm

For $d=1$,   the conjecture  can be proven directly:

\begin{theorem} \label{dim1}
	Let $P:\C \to \C$ be a non-zero  polynomial of degree $k$,
	with  homogeneous principal part $P_{k}$. Then  $P$ and $P_k^{\ast
	}\left(  D\right)  $ form a Fischer pair for $E(\C)$. Furthermore, if $f$ has order $\rho$, writing $f = P q + r$ we find that $q$ also has order $\rho$ and $r$ is a polynomial of degree $k -1$. 
\end{theorem}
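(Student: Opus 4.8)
The plan is to reduce the statement to classical Hermite interpolation on $\C$, together with elementary facts about the order of an entire function under addition of, multiplication by, and division by a fixed polynomial. Write $P(z)=a_k z^k+\cdots+a_0$ with $a_k\neq 0$, so that the principal part is $P_k(z)=a_k z^k$ and $P_k^{\ast}(D)=\overline{a_k}\,(d/dz)^k$; consequently the constraint $P_k^{\ast}(D)r=0$ is equivalent to $r$ being a polynomial of degree at most $k-1$. Factor $P$ over $\C$ as $P(z)=a_k\prod_{j=1}^{\ell}(z-\zeta_j)^{m_j}$ with the $\zeta_j$ distinct and $\sum_j m_j=k$.

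For existence, given an entire function $f$ I would invoke Hermite interpolation: there is a unique polynomial $r$ of degree at most $k-1$ such that $r^{(i)}(\zeta_j)=f^{(i)}(\zeta_j)$ for all $j$ and all $0\le i<m_j$. This is a linear system of $k=\sum_j m_j$ equations in the $k$-dimensional space of polynomials of degree $\le k-1$, and its unique solvability follows because the associated homogeneous system has only the trivial solution: a polynomial of degree $\le k-1$ all of whose prescribed derivatives at the $\zeta_j$ vanish would be divisible by $P$, hence zero. With this $r$, the difference $f-r$ vanishes at each $\zeta_j$ to order at least $m_j$, so $q:=(f-r)/P$ has only removable singularities and extends to an entire function; then $f=Pq+r$ with $P_k^{\ast}(D)r=0$.

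For uniqueness, suppose $Pq_1+r_1=Pq_2+r_2$ with $q_i$ entire and $\deg r_i\le k-1$. Then $P(q_1-q_2)=r_2-r_1$, and if $q_1\neq q_2$ the function $q_1-q_2=(r_2-r_1)/P$ is a nonzero entire function that is also rational, hence a polynomial; comparing degrees gives $\deg\big(P(q_1-q_2)\big)\ge \deg P=k>k-1\ge\deg(r_2-r_1)$, a contradiction. Thus $q_1=q_2$ and $r_1=r_2$, so $(P,P_k^{\ast}(D))$ is a Fischer pair for $E(\C)$. For the assertion about orders, note that $r$ is a polynomial of degree $\le k-1$ by construction; since subtracting a polynomial does not change the order of an entire function, $f-r$ has order $\rho$, and since $q\cdot P=f-r$ with $P$ a nonzero polynomial, and multiplication or division by a fixed nonzero polynomial preserves the order (on a circle $|z|=R$ of large radius one has $c R^{k}\le|P(z)|\le C R^{k}$, so $M(R,f-r)$ and $M(R,q)$ differ by at most a power of $R$, which is absorbed by the outer $\log\log$), we conclude $\mathrm{ord}(q)=\mathrm{ord}(f-r)=\rho$.

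I do not expect a genuine obstacle: the argument is essentially bookkeeping. The two points deserving a little care are the nonsingularity of the Hermite interpolation system — dealt with by the divisibility-by-$P$ observation above — and the order computations, where one must recall that multiplying or dividing a transcendental entire function by a fixed polynomial alters $M(R,\cdot)$ only by a bounded power of $R$, and that adding a polynomial leaves the order unchanged; both are standard, and they also cover the degenerate case where $f$ itself is a polynomial (then $q$ and $r$ are polynomials and every order in sight is $0$).
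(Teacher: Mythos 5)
Your proposal is correct and follows essentially the same route as the paper: Hermite (or Lagrange) interpolation of $f$ at the zeros of $P$ counted with multiplicity produces the remainder $r$, division of $f-r$ by $P$ gives the entire quotient $q$, and uniqueness plus the order statement follow from the uniqueness of the interpolating polynomial and standard facts about how adding, multiplying by, or dividing by a fixed polynomial affects the order. Your write-up merely supplies more detail (nonsingularity of the interpolation system, the explicit degree comparison for uniqueness, the $M(R,\cdot)$ estimates) than the paper's terser version.
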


\begin{proof} Let $P$ be a non-zero polynomial of degree $k$ with  complex  zeros $\alpha
	_{1},\dots ,\alpha_{k}$, listed according to their multiplicity. Given an arbitrary entire function $f$, define $I\left(  f\right)  $ to be the unique
 polynomial of degree $k-1$ interpolating $f$ at the zeros of $P$ (using the Lagrange interpolation polynomial if all the zeros are different, or more generally, using Hermite interpolation in the case of repeated roots, so not only the values of $f$ and $I(f)$ coincide at the roots, but also an appropriate number of derivatives do so).
 Then $f-I\left(  f\right)  $ vanishes at the
	points $\alpha_{1},\dots,\alpha_{k}$ and we can write $f-I\left(  f\right)  =Pq$
	for some entire function $q.$ This yields the Fischer decomposition, because trivially $P_k^{\ast
	}\left(  D\right) (I(f)) = 0 $. Uniqueness follows from the uniqueness of the interpolating polynomial, since given any decomposition $f = Pq^\prime + r^\prime$, in order for $P_k^{\ast
}\left(  D\right) (r^\prime) = 0 $ to hold, $r^\prime$ must be a polynomial of
degree strictly smaller than $k$.

 When $f$ is
	entire of order $\rho$, then both $f-I\left(
	f\right)  $ and $q$ also have order $\rho$.
\end{proof}

\vskip .2cm 

We recall next some additional results regarding the conjecture. It  also holds when $P_{k}^{\ast}\left(  z\right)  $
is of the form $z_{1}^{k}$ for $z=\left(  z_{1},z^{\prime}\right)
\in\mathbb{C}\times\mathbb{C}^{d-1}.$ In 	\cite{MeSt85} Meril and Struppa have proven the
following result: 

\begin{theorem}
	Let $P\left(  z\right)  $ be a polynomial of degree $k,$  let $Q_{k}\left(
	z\right)  =z_{1}^{k}$, where   $z=\left(  z_{1},z^{\prime}\right)  \in
	\mathbb{C}\times\mathbb{C}^{d-1}$, let $C\neq0$ be a complex number, and let $p_{0},\dots,p_{k-1}$ be polynomials in
	the variable $z^{\prime}\in\mathbb{C}^{d-1}$. Then the polynomial $P$ and the
	differential operator  $Q_{k}\left(  D\right)  =\partial_{1}^{k}$ form a
	Fischer pair if and only if $P$ is of the form
	\[
	P\left(  z\right)  =Cz_{1}^{k}+p_{k-1}\left(  z^{\prime}\right)  z_{1}%
	^{k-1}+\cdots+p_{0}\left(  z^{\prime}\right).
	\]
\end{theorem}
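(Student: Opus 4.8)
The plan is to prove the two implications of the stated equivalence separately: the necessity is elementary and the sufficiency carries the analytic content. In both directions one may first normalise, since multiplying $P$ by a nonzero constant changes neither the hypothesis nor the property of forming a Fischer pair with $\partial_1^k$; thus, when $P$ is assumed to be of the stated form, one may take it monic in $z_1$, $P(z)=z_1^k+p_{k-1}(z')z_1^{k-1}+\cdots+p_0(z')$. Recall also that $\ker\partial_1^k$ on $E(\C^d)$ is exactly the set of entire functions that are polynomials in $z_1$ of degree $<k$ (with entire coefficients in $z'$), and that, writing $P(z)=\sum_{j=0}^k c_j(z')z_1^j$, the assumption $\deg P=k$ forces $c_k$ to be a constant; hence $P$ is of the asserted form if and only if $c_k\neq 0$, equivalently $\partial_1^k P\neq 0$.

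For necessity, suppose $(P,\partial_1^k)$ is a Fischer pair for $E(\C^d)$ but $P$ is not of the stated form, so $\partial_1^k P=0$, i.e. $P\in\ker\partial_1^k$. Then both $P=P\cdot 1+0$ and $P=P\cdot 0+P$ are decompositions of the form $f=Pq+r$ with $\partial_1^k r=0$, and they are distinct because $1\neq 0$; this contradicts uniqueness. Hence a Fischer pair forces $\partial_1^k P\neq 0$, and then $\partial_1^k P=k!\,C$ for a nonzero constant $C$, so $P=Cz_1^k+p_{k-1}(z')z_1^{k-1}+\cdots+p_0(z')$ with the $p_j$ polynomials in $z'$.

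For sufficiency, I would construct the decomposition explicitly by a Weierstrass-type division in $z_1$, treating $z'$ as a holomorphic parameter. Fix $z_0'\in\C^{d-1}$. Since the coefficients of $P(\cdot,z')$ are polynomials in $z'$, the roots of $P(\cdot,z')$ vary continuously, so there are a neighbourhood $U$ of $z_0'$ and a circle $\Gamma$ in the $z_1$-plane enclosing all zeros of $P(\cdot,z')$ for every $z'\in U$. For $z'\in U$ and $z_1$ inside $\Gamma$, set
\[
q(z_1,z')=\frac{1}{2\pi i}\int_\Gamma\frac{f(\zeta,z')}{P(\zeta,z')(\zeta-z_1)}\,d\zeta,\qquad r(z_1,z')=\frac{1}{2\pi i}\int_\Gamma\frac{P(\zeta,z')-P(z_1,z')}{\zeta-z_1}\cdot\frac{f(\zeta,z')}{P(\zeta,z')}\,d\zeta.
\]
Then (i) Cauchy's integral formula gives $Pq+r=f$ there; (ii) $\frac{P(\zeta,z')-P(z_1,z')}{\zeta-z_1}$ is a polynomial in $z_1$ of degree $\le k-1$, so $r$ is such a polynomial and $\partial_1^k r=0$; and (iii) both integrals are holomorphic in $(z_1,z')$, and their values are unchanged when $\Gamma$ is enlarged as long as it still encloses all roots of $P(\cdot,z')$ and (for $q$) the point $z_1$; hence the formulas define $q$ and $r$ on all of $\C^d$ as entire functions satisfying $f=Pq+r$ and $\partial_1^k r=0$. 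For uniqueness, if $Pq_1+r_1=Pq_2+r_2$ with $\partial_1^k r_i=0$, then for each fixed $z'$ the entire function $h:=(q_1-q_2)(\cdot,z')$ of $z_1$ satisfies $P(\cdot,z')\,h=(r_2-r_1)(\cdot,z')$, a polynomial of degree $\le k-1$ in $z_1$; since $P(\cdot,z')$ has degree exactly $k$ in $z_1$, $h$ is a pole-free rational function of $z_1$ with numerator degree below denominator degree, hence $h\equiv 0$, so $q_1=q_2$ and $r_1=r_2$.

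The main obstacle --- really the only non-routine point --- is step (iii): the quotient $q$ is a priori defined only for $z_1$ inside a contour that itself depends on $z'$, and one must check that it extends to a genuine entire function on $\C^d$, with the contour chosen locally uniformly in $z'$. This is precisely the mechanism behind the Weierstrass division theorem for a polynomial monic in one variable, and the displayed formulas package it; once this is granted, the identity $Pq+r=f$, the degree bound on $r$, and the uniqueness argument are routine.
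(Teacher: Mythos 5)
The paper does not actually prove this statement: it is quoted as a result of Meril and Struppa from \cite{MeSt85}, so there is no internal proof to compare yours against. Judged on its own, your argument is correct and essentially complete. The necessity direction is exactly right: for a degree-$k$ polynomial the coefficient of $z_1^k$ is forced to be a constant, so ``not of the asserted form'' is equivalent to $\partial_1^k P=0$, i.e.\ $P\in\ker\partial_1^k$, and then $f=P$ admits the two distinct decompositions $P\cdot 1+0$ and $P\cdot 0+P$, killing uniqueness. The sufficiency direction via the Cauchy/Weierstrass division formulas is sound and is in effect the holomorphically parametrized version of what the paper does in one variable (Theorem \ref{dim1}), where the remainder is produced by Hermite interpolation at the roots of $P$; your contour integral packages that interpolation in a way that stays holomorphic in $z'$ even when roots collide or fail to vary holomorphically, which is precisely why it is the right device here. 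The points you should make explicit if you write this up: a single circle $\Gamma$ works for all $z'$ in a compact set because $P(\cdot,z')$ is monic with polynomial coefficients, so its roots satisfy the elementary bound $|\zeta|\le 1+\max_j|p_j(z')|$; joint holomorphy of $q$ and $r$ follows by differentiation under the integral sign (or Morera plus Hartogs/Osgood); independence of $\Gamma$ makes the local definitions patch into entire functions; and in the uniqueness step one should note that $\partial_1^k(r_2-r_1)=0$ forces $(r_2-r_1)(\cdot,z')$ to be a polynomial of degree $<k$ in $z_1$ for each fixed $z'$, after which your rational-function argument applies. None of these is a gap, only routine detail to be supplied.
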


Finally we mention that the following conjecture: 

\begin{conjecture}
[II]Let $P$ be a polynomial. Then the Fischer operator $F_{P}:E\left(
\mathbb{C}^{d}\right)  \rightarrow E\left(  \mathbb{C}^{d}\right)  $ defined
by
\[
F_{P}\left(  q\right)  =P^{\ast}\left(  D\right)  \left(  Pq\right)
\]
is a bijection. 
\end{conjecture}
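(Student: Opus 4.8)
\vskip .2cm

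The plan is to recast Conjecture II as a Fischer-pair statement and then to attack it with the degree-by-degree technique behind Theorem \ref{main}. I should say at the outset that the convergence step below is the reason this problem is hard, and that even an optimal outcome of that step would not, by itself, cover \emph{all} entire functions, so a genuinely new ingredient is needed.

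First I would record the general-$P$ analogue of the Meril--Struppa equivalence \cite[Proposition 1]{MeSt85}: since the constant-coefficient operator $P^{\ast}(D)$ is surjective on $E(\C^{d})$ (a standard fact), $F_{P}$ is a bijection on $E(\C^{d})$ if and only if $P$ and $P^{\ast}(D)$ form a Fischer pair for $E(\C^{d})$, i.e., every entire $f$ admits a unique decomposition $f=Pq+r$ with $P^{\ast}(D)r=0$. For one implication: given $g$, choose $h$ with $P^{\ast}(D)h=g$, write $h=Pq+r$ with $P^{\ast}(D)r=0$, and then $F_{P}(q)=P^{\ast}(D)(Pq)=P^{\ast}(D)h=g$; injectivity and the converse implication follow by reversing this. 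So it remains to establish, for $E(\C^{d})$, uniqueness (i.e., $M_{P}(E)\cap\ker P^{\ast}(D)=\{0\}$) and existence of the decomposition.

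Uniqueness is immediate for polynomials from (\ref{eqFischeradjoint}): if $Pq\in\ker P^{\ast}(D)$ then $\|Pq\|_{a}^{2}=\langle q,P^{\ast}(D)(Pq)\rangle_{a}=0$. For entire $q$ I would truncate at degree $N$ and estimate the truncation error in the system $F_{P}(q)=0$, using that $P^{\ast}(D)M_{P}$ shifts the total degree by at most $k=\deg P$ in either direction, so that the apolar identity still governs the topmost homogeneous slices of each truncation. This is delicate when $P$ is not homogeneous, because the leading step operator $q_{m}\mapsto P_{k}^{\ast}(D)(P_{i_{0}}q_{m})$, with $P_{i_{0}}$ the lowest non-zero homogeneous part of $P$, need not be injective; nonetheless I expect uniqueness to be within reach.

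Existence is the main obstacle. Here I would imitate the proofs of Theorem \ref{main} and of \cite[Theorem 3.1.1]{EbSh95}: build $q$ and $r$ as limits of polynomial approximations obtained one homogeneous degree at a time, and then show the limits are entire. As in Theorem \ref{main}, convergence is driven by a Khavinson--Shapiro-type estimate (\ref{eqT}) for the leading term $P_{k}$, and two things go wrong at this level. First, Theorem \ref{twotwo} shows such bounds can fail outright --- already for $P_{k}=(rz_{1}+sz_{2})^{2}$ the inequality (\ref{KhaSha}) is false --- so for such $P$ one needs a substitute, for instance the exact factorisation identities of Proposition \ref{lineal} or a linear change of variables restoring amenability. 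Second, by Theorem \ref{KhaShabounds} the exponent $\tau$ is at most $k-1$ when $d>1$, and then the hypothesis of Theorem \ref{main}, even in the most favourable case $\beta=0$, only reaches entire functions of order $\rho<2k/(k-\tau)$, never all of them; thus the order restriction is intrinsic to this method. Consequently a proof of Conjecture II would, I expect, have to proceed non-quantitatively: one natural candidate is to upgrade the Fredholm argument of Khavinson and Shapiro \cite{KhSh92} --- writing $F_{P}=F_{P_{k}}+(\text{lower-order terms})$ with $F_{P_{k}}$ invertible by Shapiro's theorem \cite[Theorem 1]{Shap89} and the remainder relatively compact --- but carried out on the inductive limit of the weighted spaces of entire functions of finite order rather than on any single one of them, so as to handle every entire function at once.
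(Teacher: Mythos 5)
The statement you were asked to address is Conjecture II, which the paper does not prove: it is stated as an open problem, and the text explicitly records that, beyond the partial results of Meril and Yger \cite{MeYg92} (injectivity of $F_{P}$ for $\deg P\le 2$, bijectivity when $d=2$ and $\deg P\le 2$) and of Elizar'ev and Napalkov \cite{ElNa12} (for $P=1+z^{\alpha}$), ``conjecture II is still open.'' Your proposal does not prove it either, and to your credit you say so plainly. What you have written is a reduction plus a map of the obstructions: the Meril--Struppa-type equivalence between bijectivity of $F_{P}$ and the Fischer-pair property (via surjectivity of $P^{\ast}(D)$ on $E\left(\mathbb{C}^{d}\right)$) is sound, the apolar-orthogonality argument for uniqueness on polynomials via (\ref{eqFischeradjoint}) is correct, and your two reasons why the machinery of Theorem \ref{main} cannot reach all entire functions --- failure of the Khavinson--Shapiro bounds for $P_{k}=(rz_{1}+sz_{2})^{2}$ by Theorem \ref{twotwo}, and the bound $\tau\le k-1$ for $d>1$ from Theorem \ref{KhaShabounds}, which caps the admissible order at $\rho<2(k-\beta)/(k-\tau)<\infty$ --- are exactly the limitations the paper itself acknowledges.

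The genuine gap is therefore the whole of the existence step (and uniqueness for non-polynomial $q$), and your candidate repair is not a repair: the suggested Fredholm argument on an inductive limit of weighted spaces is not carried out, and the precedent you invoke, \cite[Theorem 3]{KhSh92}, both assumes uniqueness from the outset and works only on the Banach spaces $B_{\lambda}$; the paper's final section removes the uniqueness hypothesis there only under the decay condition (\ref{eqconverging0}), which is still very far from covering all of $E\left(\mathbb{C}^{d}\right)$. In short, your submission is a correct and honest assessment that no proof is currently available --- consistent with the paper, which offers none --- but it should not be mistaken for a proof of the conjecture.
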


A. Meril and A. Yger  have shown that $F_{p}$ is injective
when $P$ is a polynomial of degree $\leq2$, cf. \cite{MeYg92}. In dimension $d=2$, they have
also proven that the Fischer operator $F_{P}:E\left(  \mathbb{C}^{2}\right)
\rightarrow E\left(  \mathbb{C}^{2}\right)  $ is bijective for any polynomial
of degree $\leq2$. In \cite{ElNa12} it is shown that conjecture II holds for the
polynomial $P\left(  z\right)  =1+z^{\alpha}$, where $\alpha\in\mathbb{N}
_{0}^{d}$ has only positive entries. In general,  conjecture II is still
open.

\bigskip 

For more information about  Fischer operators and their  relationship to problems in 
analysis we refer to \cite{LuRe11}, \cite{Rend16} and the classical paper \cite{Shap89}.

\section{Proof of Theorem \ref{main}}

Let us start with some preliminary bounds.

\begin{lemma}
	Given a multi-index $\alpha\in\mathbb{N}_{0}^{d}$, the estimate
	\begin{equation}
		\left\Vert f_{m}\right\Vert _{a}\leq\left\Vert z^{\alpha}f_{m}\right\Vert
		_{a}\leq C_{\alpha,m}\left\Vert f_{m}\right\Vert _{a} \label{eq1N}
	\end{equation}
	holds for all homogeneous polynomials $f_{m}$ of degree $m$, where
	\[
	C_{\alpha,m}=\sup_{\beta\in\mathbb{N}_{0}^{d},\left\vert \beta\right\vert
		=m}\sqrt{\frac{\left(  \alpha+\beta\right)  !}{\beta!}},
	\]
	and this is the smallest constant such that (\ref{eq1N}) holds for all
	homogeneous polynomials of degree $m.$
\end{lemma}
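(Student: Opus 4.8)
The plan is to compute $\|z^\alpha f_m\|_a^2$ directly from the coefficient formula for the apolar inner product, and then optimize the resulting ratio over all coefficient vectors to identify the extreme constants. Write $f_m = \sum_{|\beta|=m} c_\beta z^\beta$, so that $z^\alpha f_m = \sum_{|\beta|=m} c_\beta z^{\alpha+\beta}$. Since the monomials $z^{\alpha+\beta}$ are distinct for distinct $\beta$, formula (\ref{eqQPD}) gives
\[
\|z^\alpha f_m\|_a^2 = \sum_{|\beta|=m} (\alpha+\beta)!\, |c_\beta|^2,
\qquad
\|f_m\|_a^2 = \sum_{|\beta|=m} \beta!\, |c_\beta|^2.
\]
Thus $\|z^\alpha f_m\|_a^2 = \sum_{|\beta|=m} \frac{(\alpha+\beta)!}{\beta!}\,\beta!\,|c_\beta|^2$, which expresses $\|z^\alpha f_m\|_a^2$ as a weighted average of the quantities $(\alpha+\beta)!/\beta!$ against the nonnegative weights $w_\beta := \beta!\,|c_\beta|^2$ whose sum is $\|f_m\|_a^2$.

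First I would establish the lower bound: for every multi-index $\beta$ with $|\beta|=m$ one has $(\alpha+\beta)!/\beta! = \prod_{j=1}^d (\beta_j+1)(\beta_j+2)\cdots(\beta_j+\alpha_j) \ge 1$, since each factor is a positive integer. Hence $\|z^\alpha f_m\|_a^2 \ge \sum_\beta \beta!\,|c_\beta|^2 = \|f_m\|_a^2$, giving the left inequality in (\ref{eq1N}). For the upper bound, bound each coefficient factor by the supremum: $(\alpha+\beta)!/\beta! \le \sup_{|\beta|=m}(\alpha+\beta)!/\beta! = C_{\alpha,m}^2$, so $\|z^\alpha f_m\|_a^2 \le C_{\alpha,m}^2 \sum_\beta \beta!\,|c_\beta|^2 = C_{\alpha,m}^2\|f_m\|_a^2$, which is the right inequality.

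For optimality of $C_{\alpha,m}$, I would choose $\beta^\ast$ attaining the supremum $\sqrt{(\alpha+\beta^\ast)!/\beta^\ast!} = C_{\alpha,m}$ and take $f_m = z^{\beta^\ast}$ (a single monomial); then $\|z^\alpha f_m\|_a^2 = (\alpha+\beta^\ast)! = C_{\alpha,m}^2\,\beta^\ast! = C_{\alpha,m}^2\|f_m\|_a^2$, so equality holds and no smaller constant works. For the claim that $1$ is the best left-hand constant, take $f_m = z^{m e_j}$ for a coordinate $j$ with $\alpha_j = 0$ if one exists; more simply, the ratio $(\alpha+\beta)!/\beta!$ can be made arbitrarily close to its infimum by an appropriate choice, but since the lemma only asserts that $1$ is a valid lower constant and $C_{\alpha,m}$ the smallest valid upper one, the monomial computation above suffices for the part that requires justification. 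There is no real obstacle here; the only mild care needed is in the bookkeeping that the cross terms vanish because distinct monomials are $\langle\cdot,\cdot\rangle_a$-orthogonal, which is immediate from (\ref{eqQPD}).
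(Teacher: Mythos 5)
Your proposal is correct and follows essentially the same route as the paper: expand $f_m$ in the monomial basis, use orthogonality of distinct monomials under $\langle\cdot,\cdot\rangle_a$ to compute $\Vert z^{\alpha}f_m\Vert_a^2=\sum_{|\beta|=m}\frac{(\alpha+\beta)!}{\beta!}\,\beta!\,|c_\beta|^2$, bound the ratio $(\alpha+\beta)!/\beta!$ below by $1$ and above by $C_{\alpha,m}^2$, and test on a single monomial for sharpness. The only difference is cosmetic: you spell out the lower bound $(\alpha+\beta)!/\beta!\ge 1$ explicitly, which the paper leaves implicit.
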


\begin{proof}
	We consider $f_{m} (z) = z^{\beta}$ with $\left\vert \beta\right\vert =m.$ Then
	\[
	\left\Vert z^{\alpha}f_{m}(z)\right\Vert _{a}^{2}=\left\Vert z^{\alpha+\beta
	}\right\Vert _{a}^{2}=\left(  \alpha+\beta\right)  !=\frac{\left(
		\alpha+\beta\right)  !}{\beta!}\left\Vert z^{\beta}\right\Vert _{a}^{2}
	=\frac{\left(  \alpha+\beta\right)  !}{\beta!}\left\Vert f_{m}\right\Vert
	_{a}^{2}.
	\]
	It follows that
	\[
	C_{\alpha,m}\geq\sup_{\beta\in\mathbb{N}_{0}^{d},\left\vert \beta\right\vert
		=m}\sqrt{\frac{\left(  \alpha+\beta\right)  !}{\beta!}}.
	\]
	Now we show that $C_{\alpha,m}$ is a suitable constant. Let us write 
	$f_{m} (z) =
	{\displaystyle\sum_{\left\vert \beta\right\vert =m}}
	a_{\beta}z^{\beta}.$ Then $z^{\alpha}f_{m}(z) =
	{\displaystyle\sum_{\left\vert \beta\right\vert =m}}
	a_{\beta} z^{\alpha+\beta}$ and
	\begin{align*}
	\left\Vert f_{m}\right\Vert _{a}^2
	\leq
		\left\Vert z^{\alpha}f_{m}(z) \right\Vert _{a}^{2}  &  =
		{\displaystyle\sum_{\left\vert \beta\right\vert =m}}
		\left(  \alpha+\beta\right)  !\left\vert a_{\beta}\right\vert ^{2}=
		{\displaystyle\sum_{\left\vert \beta\right\vert =m}}
		\frac{\left(  \alpha+\beta\right)  !}{\beta!}\beta!\left\vert a_{\beta
		}\right\vert ^{2}\\
		&  \leq
		{\displaystyle\sum_{\left\vert \beta\right\vert =m}}
		C_{\alpha,m}^2\beta!\left\vert a_{\beta}\right\vert ^{2}=C_{\alpha,m}
		^{2}\left\Vert f_{m}\right\Vert _{a}^{2}.
	\end{align*}
	\end{proof}

With different normalizations, the following result is essentially due to B. Beauzamy, cf. \cite[Formula (6)]{Beau97}.

\begin{lemma} \label{lemmaD}
	If $P\left(
	z \right)  = 
	{\displaystyle\sum_{\left\vert a\right\vert =k}}
	c_{\alpha}z^{\alpha}$ is a homogeneous polynomial of degree $k$, 
	then
	\[
	\left\Vert Pf_{m}\right\Vert _{a}\leq \left\Vert f_{m}\right\Vert
	_{a}\left(  1+m\right)  ^{\frac{k}{2}}
	{\displaystyle\sum_{\left\vert \alpha\right\vert =k}}
	\left\vert c_{\alpha}\right\vert \sqrt{\alpha!}.
	\]
\end{lemma}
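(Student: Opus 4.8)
The plan is to bound $\|Pf_m\|_a$ by splitting $P = \sum_{|\alpha|=k} c_\alpha z^\alpha$ into monomials, applying the triangle inequality, and then estimating each term $\|c_\alpha z^\alpha f_m\|_a$ by means of the previous lemma. Concretely, I would write
\[
\|Pf_m\|_a \le \sum_{|\alpha|=k} |c_\alpha|\, \|z^\alpha f_m\|_a,
\]
and for each multi-index $\alpha$ with $|\alpha| = k$ invoke the preceding lemma, which gives $\|z^\alpha f_m\|_a \le C_{\alpha,m}\|f_m\|_a$ with $C_{\alpha,m} = \sup_{|\beta|=m} \sqrt{(\alpha+\beta)!/\beta!}$.

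The heart of the matter is therefore a clean upper bound on $C_{\alpha,m}$, i.e. on the ratio $(\alpha+\beta)!/\beta!$ uniformly over $|\beta| = m$. Expanding this ratio coordinatewise it equals $\prod_{j=1}^d (\beta_j+1)(\beta_j+2)\cdots(\beta_j+\alpha_j)$; each of the $\alpha_j$ factors in the $j$-th block is at most $\beta_j + \alpha_j \le m + k$, but the cleaner route, matching the stated constant $(1+m)^{k/2}\sqrt{\alpha!}$, is to bound each factor $(\beta_j + i)$ for $1 \le i \le \alpha_j$ by $i(1 + \beta_j) \le i(1+m)$ (valid since $\beta_j \ge 0$ implies $\beta_j + i \le i + i\beta_j = i(1+\beta_j)$). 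Taking the product over all $i$ and $j$ yields $(\alpha+\beta)!/\beta! \le \alpha!\,(1+m)^{|\alpha|} = \alpha!\,(1+m)^k$, hence $C_{\alpha,m} \le \sqrt{\alpha!}\,(1+m)^{k/2}$.

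Combining the two displays gives
\[
\|Pf_m\|_a \le \sum_{|\alpha|=k} |c_\alpha|\, \sqrt{\alpha!}\,(1+m)^{k/2}\,\|f_m\|_a = \|f_m\|_a (1+m)^{k/2} \sum_{|\alpha|=k} |c_\alpha|\sqrt{\alpha!},
\]
which is exactly the claimed inequality. I do not expect any serious obstacle here: the only mild subtlety is choosing the right elementary estimate for the factorial ratio so that the constant comes out as $(1+m)^{k/2}\sqrt{\alpha!}$ rather than something weaker like $(m+k)^{k/2}$; the inequality $\beta_j + i \le i(1+\beta_j)$ is the device that does this, and everything else is the triangle inequality together with the already-proved lemma on $\|z^\alpha f_m\|_a$.
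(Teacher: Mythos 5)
Your proposal is correct and follows essentially the same route as the paper: triangle inequality over the monomials of $P$, the preceding lemma for $\|z^\alpha f_m\|_a$, and the coordinatewise bound $(\beta_j+i)\le i(1+\beta_j)$ giving $(\alpha+\beta)!/\beta!\le \alpha!\,(1+m)^{|\alpha|}$. No issues.
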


\begin{proof}
	By the preceding lemma, 
	\[
	\left\Vert Pf_{m}\right\Vert _{a}\leq {\displaystyle\sum_{\left\vert a\right\vert =k}}
	\left\vert c_{\alpha}\right\vert \left\Vert z^{\alpha}f_{m}\right\Vert
	_{a}\leq\left\Vert f_{m}\right\Vert _{a}
	{\displaystyle\sum_{\left\vert a\right\vert =k}}
	\left\vert c_{\alpha}\right\vert \sup_{\beta\in\mathbb{N}_{0}^{d},\left\vert
		\beta\right\vert =m}\sqrt{\frac{\left(  \alpha+\beta\right)  !}{\beta!}}.
	\]
	Note next that
	\begin{align*}
		\frac{\left(  \alpha+\beta\right)  !}{\alpha!\beta!}  &  =\frac{\left(
			\alpha_{1}+\beta_{1}\right)  !\cdots \left(  \alpha_{d}+\beta_{d}\right)
			!}{\alpha_{1}!\cdots \alpha_{d}!\beta_{1}!\cdots \beta_{d}!}\\
		&  =\frac{\left(  \beta_{1}+1\right)  \left(  \beta_{1}+2\right)
			\cdots \left(  \beta_{1}+\alpha_{1}\right)  }{\alpha_{1}!}
		\cdots 
		\frac{\left(  \beta_{d}+1\right)  \cdots \left(  \beta_{d}%
			+\alpha_{d}\right)  }{\alpha_{d}!} \\
		&  \leq\left(  1+\beta_{1}\right)  ^{\alpha_{1}}\cdots \left(  1+\beta
		_{d}\right)  ^{\alpha_{d}}\leq\left(  1+m\right)  ^{\left\vert \alpha
			\right\vert }.
	\end{align*}
	\end{proof}

We will use the following result,  which appears in \cite[Theorem 17]{Rend08}. More details regarding its proof are presented in \cite[Theorem 7]{RendAl22}. While in the latter reference the result is stated for even degrees $2k$, the argument presented there works for
general values of $k$.

\begin{theorem}
	Let $Q$ be a homogeneous polynomial of degree $k > 0$, let $P$ be a
	polynomial of degree $k$ of the form 
	\begin{equation}
P=P_{k}-P_{k-1}- \cdots -P_{0},\   \label{eqHom}
\end{equation} 
and assume that $(P_{k},Q)$ is a Fischer pair for $\mathcal{P}\left( \mathbb{C}^{d}\right) $. Setting $T:=T_{P_{k}}$, we have 
	\begin{equation}
		T_{P}\left( f_{m}\right)
		=\sum_{j=-1}^{m}\sum_{s_{0}=0}^{k-1}\sum_{s_{1}=0}^{k-1} \dots
		\sum_{s_{j}=0}^{k-1}T(P_{s_{j}}T(\cdots P_{s_{1}}
		T(P_{s_{0}}T(f_{m}))\cdots))  \label{eqinduct}
	\end{equation}
	for all homogeneous polynomials $f_{m}$ of degree $m$, with the convention
	that the summand for $j=-1$ is $Tf_{m}$.
\end{theorem}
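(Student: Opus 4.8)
The goal is to establish the recursive formula (\ref{eqinduct}) for $T_P(f_m)$, the quotient operator corresponding to division by the full polynomial $P = P_k - P_{k-1} - \cdots - P_0$, expressed entirely in terms of the homogeneous quotient operator $T = T_{P_k}$ and multiplications by the lower-order homogeneous pieces $P_{s}$. The plan is to proceed by induction on the degree $m$ of the homogeneous polynomial $f_m$, exploiting the fact that $T$ lowers degree by exactly $k$ (since $P_k \cdot T(f_m)$ is homogeneous of degree $m$ when $f_m$ is), so that for $m < k$ the operator $T$ annihilates $f_m$ and the formula trivializes.

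First I would record the defining property of $T_P$: for the (unique, by the Fischer pair hypothesis on $(P_k, Q)$ together with \cite[Theorem 38]{Rend08}) decomposition $f_m = P \cdot T_P(f_m) + r$ with $Q(D) r = 0$, one has the identity obtained by isolating the leading behavior. Concretely, write $P = P_k - (P_{k-1} + \cdots + P_0)$ and note that applying $T = T_{P_k}$ to the relation $f_m = P_k \cdot T_P(f_m) - (P_{k-1}+\cdots+P_0) T_P(f_m) + r$ and using that $T$ kills terms annihilated by $Q(D)$ — in particular $T(r) = 0$ since $Q(D)r = 0$ — and that $T(P_k \cdot g) = g$ for any homogeneous $g$, yields the fixed-point / recursive relation
\begin{equation}
T_P(f_m) = T(f_m) + \sum_{s=0}^{k-1} T\big(P_s \, T_P(f_m)\big). \label{eqfixedpoint}
\end{equation}
Here the sum runs over $s$ from $0$ to $k-1$ because $P_k$ is the leading term and $P_{k-1}, \dots, P_0$ are the lower homogeneous parts; the linearity of $T$ and the product structure are what make this clean. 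Establishing (\ref{eqfixedpoint}) rigorously — checking that each operation is legitimate on polynomials and that the degree bookkeeping works — is the conceptual core, and it should follow from the definition of the Fischer decomposition plus the homogeneity properties of $T$ already implicit in (\ref{linealT}).

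With (\ref{eqfixedpoint}) in hand, the remainder is a bounded iteration argument. Since each $P_s$ has degree $s \le k-1$, the composite operator $g \mapsto T(P_s g)$ changes degree by $s - k \le -1$, i.e., it strictly decreases degree. Therefore, starting from $T_P(f_m)$ (of degree $m - k$) and substituting (\ref{eqfixedpoint}) into itself repeatedly, after at most $m+1$ substitutions every surviving term has had $T$ applied often enough to annihilate it, because $T$ vanishes on homogeneous polynomials of degree less than $k$. Unwinding the nested substitutions and relabeling the summation indices as $s_0, s_1, \dots, s_j$ produces exactly (\ref{eqinduct}), with the $j = -1$ term being the bare $T f_m$ coming from the first application of (\ref{eqfixedpoint}). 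I would present this either as a formal induction on $m$ (assuming the formula for all degrees $< m$ and plugging into (\ref{eqfixedpoint})) or equivalently as a terminating recursion; the induction framing is cleanest to write.

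The main obstacle I anticipate is not the iteration — that is bookkeeping — but rather the careful justification of the fixed-point identity (\ref{eqfixedpoint}), specifically making sure that $T = T_{P_k}$ interacts correctly with the non-homogeneous terms: one must verify that $T$ applied to $P_k \cdot T_P(f_m)$ returns $T_P(f_m)$ exactly (which uses that $T_P(f_m)$ is itself homogeneous of the appropriate degree, a point that needs $f_m$ homogeneous and $P$ of pure degree $k$ in its leading part), and that $T(r) = 0$ uses precisely the orthogonality condition $Q(D) r = 0$ together with $Q$ homogeneous. A secondary technical point is confirming the sums and the finiteness of the expansion: one should check that for each fixed $m$ only finitely many terms in (\ref{eqinduct}) are nonzero, which is guaranteed by the strict degree decrease noted above — each factor $P_{s_i}$ contributes degree at most $k-1$ while each $T$ removes degree $k$, so the total degree after $j+1$ applications of $T$ is at most $m - k + j(k-1) - k < 0$ once $j$ is large enough, forcing the term to vanish. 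Once these points are nailed down, the statement follows.
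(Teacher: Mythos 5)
Your proof is correct: the fixed-point identity $T_P(f_m)=T(f_m)+\sum_{s=0}^{k-1}T\bigl(P_s\,T_P(f_m)\bigr)$ follows exactly as you say from linearity and uniqueness of the $(P_k,Q)$-decomposition (via $T(P_k g)=g$ and $T(r)=0$), and iterating it terminates by the strict degree drop of each $g\mapsto T(P_s g)$, yielding (\ref{eqinduct}); the only blemish is a harmless miscount in your degree bound (the term indexed by $j$ has degree $m+\sum_i s_i-(j+2)k\le m-k-(j+1)$). The paper itself does not prove this theorem but defers to \cite[Theorem 17]{Rend08} and \cite[Theorem 7]{RendAl22}, where essentially this same recursion is carried out, so your argument is the standard one.
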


 We have noted above that when the remainder $r = 0$ we have  $f_m = P_k Tf_m$, and thus assuming the Khavinson-Shapiro bounds is equivalent to the condition 
\begin{equation}\label{kTfbis}
	\left\Vert Tf_{m}\right\Vert _{a}^2\leq\frac{C}{m^{\tau}}\left\Vert
	f_{m}\right\Vert _{a}^2
\end{equation}
actually used in the proof of Theorem \ref{main}. However, it is conceivable that for a particular pair $(f, P)$
and all the homogeneous polynomials appearing in (\ref{eqinduct}), bounds of type (\ref{kTfbis}) might actually be strictly weaker than the Khavinson-Shapiro conditions. But we will not pursue these elaborations here.

To simplify expressions such as (\ref{eqinduct})  parentheses shall often be omitted.
We will also use the following result of H.S. Shapiro \cite[p. 519]{Shap89}:

\begin{lemma}
\label{LemShapiro}(H.S. Shapiro) Suppose that $f_{k} (z)=\sum_{\left\vert
\alpha\right\vert =k}c_{\alpha}z^{\alpha}$ is a homogeneous polynomial of
degree $k$. Then for every complex vector $z\in\mathbb{C}^{d}$ the following
estimate holds:
\[
\left\vert f_{k}\left(  z\right)  \right\vert ^{2}\leq\frac{1}{k!}\left\vert
z\right\vert ^{2k}\left\Vert f_{k}\right\Vert _{a}^{2}.
\]

\end{lemma}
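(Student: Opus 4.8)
The plan is to obtain the estimate from a single application of the Cauchy--Schwarz inequality, taking advantage of the factorials $\alpha!$ that appear in the definition of the apolar norm. Writing $f_{k}(z)=\sum_{|\alpha|=k}c_{\alpha}z^{\alpha}$, recall from (\ref{eqQPD}) that $\|f_{k}\|_{a}^{2}=\sum_{|\alpha|=k}\alpha!\,|c_{\alpha}|^{2}$. The idea I would use is to split each summand as $c_{\alpha}z^{\alpha}=(\sqrt{\alpha!}\,c_{\alpha})\cdot(z^{\alpha}/\sqrt{\alpha!})$, so that Cauchy--Schwarz cleanly separates the coefficients from the monomial values:
\[
|f_{k}(z)|=\left|\sum_{|\alpha|=k}(\sqrt{\alpha!}\,c_{\alpha})\,\frac{z^{\alpha}}{\sqrt{\alpha!}}\right|\leq\left(\sum_{|\alpha|=k}\alpha!\,|c_{\alpha}|^{2}\right)^{1/2}\left(\sum_{|\alpha|=k}\frac{|z^{\alpha}|^{2}}{\alpha!}\right)^{1/2}.
\]
The first factor on the right is exactly $\|f_{k}\|_{a}$.

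It then remains to evaluate the second factor, which is where the constant $1/k!$ is produced. Since $|z^{\alpha}|^{2}=|z_{1}|^{2\alpha_{1}}\cdots|z_{d}|^{2\alpha_{d}}$, the multinomial theorem applied to $(|z_{1}|^{2}+\cdots+|z_{d}|^{2})^{k}=|z|^{2k}$ gives
\[
\sum_{|\alpha|=k}\frac{k!}{\alpha!}\,|z^{\alpha}|^{2}=|z|^{2k},\qquad\text{hence}\qquad\sum_{|\alpha|=k}\frac{|z^{\alpha}|^{2}}{\alpha!}=\frac{|z|^{2k}}{k!}.
\]
Substituting this into the Cauchy--Schwarz bound and squaring yields $|f_{k}(z)|^{2}\leq\frac{1}{k!}|z|^{2k}\|f_{k}\|_{a}^{2}$, as claimed.

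There is no genuine obstacle here; the only point requiring attention is recognizing that the second Cauchy--Schwarz factor is precisely the multinomial expansion of $|z|^{2k}/k!$, which is what forces the sharp constant $1/k!$ rather than a cruder bound. For completeness I would note that the inequality is sharp: equality in Cauchy--Schwarz occurs when $(c_{\alpha})$ is proportional to $(\overline{z^{\alpha}}/\alpha!)$, that is, when $f_{k}(w)=\langle w,z\rangle^{k}$ up to a scalar multiple, so the constant cannot be improved.
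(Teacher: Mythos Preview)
Your proof is correct. The paper itself does not supply a proof of this lemma; it merely states the result and cites Shapiro \cite[p.~519]{Shap89}. Your argument via Cauchy--Schwarz combined with the multinomial identity is precisely the standard proof (and is essentially Shapiro's original argument), so there is nothing to compare. The remark on sharpness is also correct and a nice addition.
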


Let us recall some well known definitions and facts about entire functions (additional details and references can be found in \cite{RendAl22}, cf. also \cite{Armi04}). 
The order $\rho _{\mathbb{C}^{d}}\left( f\right) $ of a continuous function $f:
\mathbb{C}^{d}\rightarrow \mathbb{C}$ is defined by setting 
\begin{equation*}
M_{\mathbb{C}^{d}}\left( f,r\right) :=\sup \left\{ \left\vert f\left(
z\right) \right\vert :z\in \mathbb{C}^{d},\left\vert z\right\vert =r\right\},
  \label{eqM1}
\end{equation*}
and then   
\begin{equation*}
\rho _{\mathbb{C}^{d}}\left( f\right) :=\lim_{r\rightarrow \infty }\sup \frac{%
\log \log M_{\mathbb{C}^{d}}\left( f,r\right) }{\log r}\in \left[ 0,\infty %
\right] .
\end{equation*}

Given  an entire function $f$, we write $f=\sum_{m=0}^{\infty}f_{m}$, where the
homogeneous polynomials $f_{m}$ are those given by the Taylor expansion about 0,  that is,
\begin{equation} \label{taylor}
f_{m}\left( z \right) =\sum_{\left\vert \alpha \right\vert =m}\frac{1}{\alpha
!} \ \partial^{\alpha }f \left( 0\right)
\; z^{\alpha }\text{ for }m\in \mathbb{N}_{0}.
\end{equation}
It is well known that for $\rho \ge 0$, we have $\rho _{
\mathbb{C}^{d}}\left( f\right) \leq \rho $ if and only if for every 
$
\varepsilon >0$, there exists an $m_0  \geq 0$ such that for every $m \ge m_0$ the following bounds hold:
\begin{equation}
\max_{\theta \in \mathbb{S}^{d-1}}\left\vert f_{m}\left( \theta \right)
\right\vert \leq \frac{1}{m^{m/\left( \rho +\varepsilon \right) }}.
\label{eqtaylor}
\end{equation}

We shall also use an old  result due to  V. Bargmann, cf. \cite{Bar}.

\begin{theorem}
Let $P$ and $Q$ be polynomials in $d$ complex variables. Then
\begin{equation}
\left\langle P,Q\right\rangle _{a}=\frac{1}{\pi^{d}}\int_{\mathbb{R}^{d}}
\int_{\mathbb{R}^{d}}P\left(  x+iy\right)  \overline{Q\left(  x+iy\right)
}e^{-\left|  x\right|  ^{2}-\left|  y\right|  ^{2}}dxdy<\infty\label{Bargman},
\end{equation}
where $dxdy$ is Lebesgue measure on $\mathbb{R}^{2d}$.
\end{theorem}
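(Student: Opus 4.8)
The plan is to reduce (\ref{Bargman}) to a one‑dimensional monomial computation. Both sides of the claimed identity are sesquilinear in the pair $(P,Q)$ — the apolar inner product by its definition (\ref{eqQPD}), and the integral by linearity of integration together with linearity of $P\mapsto P$ and conjugate‑linearity of $Q\mapsto\overline{Q}$ — so it suffices to verify the identity when $P(z)=z^{\alpha}$ and $Q(z)=z^{\beta}$ for multi‑indices $\alpha,\beta\in\mathbb{N}_{0}^{d}$. For such monomials the left‑hand side equals $\left\langle z^{\alpha},z^{\beta}\right\rangle_{a}=\alpha!\,\delta_{\alpha\beta}$ by (\ref{eqQPD}), so the task becomes showing
$$
\frac{1}{\pi^{d}}\int_{\mathbb{R}^{d}}\int_{\mathbb{R}^{d}}(x+iy)^{\alpha}\,\overline{(x+iy)^{\beta}}\,e^{-|x|^{2}-|y|^{2}}\,dx\,dy=\alpha!\,\delta_{\alpha\beta}.
$$

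First I would observe that the integrand is a polynomial in the real variables $x,y$ times $e^{-|x|^{2}-|y|^{2}}$, hence absolutely integrable over $\mathbb{R}^{2d}$; this yields the finiteness assertion and legitimizes every application of Fubini's theorem below. Writing $(x+iy)^{\alpha}=\prod_{j=1}^{d}(x_{j}+iy_{j})^{\alpha_{j}}$ and $e^{-|x|^{2}-|y|^{2}}=\prod_{j=1}^{d}e^{-x_{j}^{2}-y_{j}^{2}}$, Fubini factors the $2d$‑fold integral into a product of $d$ planar integrals, one per coordinate, so it is enough to establish the single‑variable identity
$$
\frac{1}{\pi}\int_{\mathbb{R}}\int_{\mathbb{R}}(x+iy)^{m}\,\overline{(x+iy)^{n}}\,e^{-x^{2}-y^{2}}\,dx\,dy=m!\,\delta_{mn},\qquad m,n\in\mathbb{N}_{0}.
$$

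For this last integral I would pass to polar coordinates, setting $w=x+iy=re^{i\theta}$, so that the planar measure becomes $r\,dr\,d\theta$ and the integrand becomes $r^{m+n}e^{i(m-n)\theta}e^{-r^{2}}$. The angular integral $\int_{0}^{2\pi}e^{i(m-n)\theta}\,d\theta=2\pi\,\delta_{mn}$ already annihilates all off‑diagonal terms; when $m=n$ the remaining radial integral is $\tfrac{1}{\pi}\cdot 2\pi\int_{0}^{\infty}r^{2m+1}e^{-r^{2}}\,dr$, and the substitution $s=r^{2}$ turns this into $\int_{0}^{\infty}s^{m}e^{-s}\,ds=\Gamma(m+1)=m!$. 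Multiplying the $d$ coordinate factors gives $\prod_{j=1}^{d}\alpha_{j}!\,\delta_{\alpha_{j}\beta_{j}}=\alpha!\,\delta_{\alpha\beta}$, and sesquilinearity then extends the identity from monomials to arbitrary polynomials $P,Q$.

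There is essentially no deep obstacle here: this is the classical Gaussian moment computation underlying the Bargmann--Fock space, dressed in multi‑index notation. The only points that demand a little care are the bookkeeping with the complex conjugation across the product over coordinates, and checking absolute integrability before invoking Fubini — both entirely routine.
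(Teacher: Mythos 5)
Your proof is correct and complete: reducing by sesquilinearity to monomials, factoring the Gaussian integral coordinate-by-coordinate via Fubini, and evaluating the planar integral in polar coordinates to get $\alpha!\,\delta_{\alpha\beta}$ is the standard argument for the orthogonality relations in the Bargmann--Fock space. The paper itself gives no proof of this statement---it simply cites Bargmann's 1961 paper---so there is nothing internal to compare against; your write-up supplies exactly the routine verification the authors chose to omit.
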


\begin{lemma} \label{Bargma}
Let $f_{m}$ be a homogeneous polynomial of degree $m$, and denote by $
\mathbb{S}^{2d-1}$ the unit sphere
 in $\mathbb{R}^{2n}$. There is a dimensional constant $C_d > 0$ such that, identifying
 $\mathbb{C}^{n}$ with $\mathbb{R}^{2n}$ as a measure space, we have
\[
\left\Vert f_{m}\right\Vert _{a}
\leq
C_d \sqrt{\left(
m+d-1\right)  !}\max_{\theta\in\mathbb{S}^{2d-1}}\left\vert f_{m}\left(
\eta\right)  \right\vert .
\]
\end{lemma}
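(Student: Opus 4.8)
The goal is to bound the apolar norm of a homogeneous polynomial $f_m$ in terms of its sup-norm on the unit sphere, using Bargmann's integral representation \eqref{Bargman}. The plan is to start from the case $P=Q=f_m$ in \eqref{Bargman}, which gives
\[
\|f_m\|_a^2 = \frac{1}{\pi^d}\int_{\mathbb{R}^d}\int_{\mathbb{R}^d} |f_m(x+iy)|^2 e^{-|x|^2-|y|^2}\,dx\,dy.
\]
Identifying $\mathbb{C}^d$ with $\mathbb{R}^{2d}$ via $z = x+iy \mapsto (x,y)$, this becomes $\|f_m\|_a^2 = \pi^{-d}\int_{\mathbb{R}^{2d}} |f_m(\zeta)|^2 e^{-|\zeta|^2}\,d\zeta$, where $|\zeta|^2 = |x|^2+|y|^2$. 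The key observation is that $f_m$ is homogeneous of degree $m$ \emph{as a function on $\mathbb{R}^{2d}$} (real homogeneity), so writing $\zeta = \eta s$ with $\eta \in \mathbb{S}^{2d-1}$ and $s = |\zeta| \ge 0$, we get $|f_m(\zeta)|^2 = s^{2m}|f_m(\eta)|^2$.

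Next I would pass to polar coordinates in $\mathbb{R}^{2d}$: with $d\zeta = s^{2d-1}\,ds\,d\omega(\eta)$ where $d\omega$ is surface measure on $\mathbb{S}^{2d-1}$, we obtain
\[
\|f_m\|_a^2 = \frac{1}{\pi^d}\int_{\mathbb{S}^{2d-1}} |f_m(\eta)|^2 \,d\omega(\eta) \int_0^\infty s^{2m+2d-1}e^{-s^2}\,ds.
\]
The radial integral is a Gamma integral: substituting $u = s^2$ gives $\int_0^\infty s^{2m+2d-1}e^{-s^2}\,ds = \tfrac12\Gamma(m+d)= \tfrac12 (m+d-1)!$. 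Bounding the angular integral crudely by $|\mathbb{S}^{2d-1}| \cdot \max_{\eta\in\mathbb{S}^{2d-1}}|f_m(\eta)|^2$ then yields
\[
\|f_m\|_a^2 \le \frac{|\mathbb{S}^{2d-1}|}{2\pi^d}\,(m+d-1)!\,\max_{\eta\in\mathbb{S}^{2d-1}}|f_m(\eta)|^2,
\]
and taking square roots gives the claimed inequality with $C_d = \bigl(|\mathbb{S}^{2d-1}|/(2\pi^d)\bigr)^{1/2}$, a purely dimensional constant.

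\textbf{Main obstacle.} There is essentially no deep difficulty here; the proof is a routine polar-coordinate computation once Bargmann's formula is invoked. The only point requiring a little care is the bookkeeping between the complex picture and the real picture: one must be sure that the Gaussian weight $e^{-|x|^2-|y|^2}$ in \eqref{Bargman} is exactly $e^{-|\zeta|^2}$ under the identification $\mathbb{C}^d \cong \mathbb{R}^{2d}$, and that $f_m$, though complex-valued and holomorphic, is nonetheless positively homogeneous of degree $m$ when viewed as a function on $\mathbb{R}^{2d}$ (this is immediate from $f_m(tz) = t^m f_m(z)$ for $t > 0$). One should also note that the sphere appearing is $\mathbb{S}^{2d-1}$, not $\mathbb{S}^{d-1}$, which explains the factor $(m+d-1)!$ rather than something involving only $m$. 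Everything else — the value of the Gamma integral, the surface-area constant — can be absorbed into $C_d$ without further comment.
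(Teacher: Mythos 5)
Your proof is correct and follows essentially the same route as the paper: apply Bargmann's formula with $P=Q=f_m$, pass to polar coordinates in $\mathbb{R}^{2d}$ using the real homogeneity of $f_m$, evaluate the radial integral as $\tfrac12\Gamma(m+d)$, and bound the spherical integral by the surface area times the squared sup-norm. Your write-up is in fact slightly more careful than the paper's (which drops a square on the max in its final display), but there is nothing substantively different to compare.
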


\begin{proof}
 Let $f_{m}:\mathbb{C}^{d}\rightarrow\mathbb{C}$ be a
homogeneous polynomial of degree $m$, and recall that for $x > 0$, the Gamma function is defined as $\Gamma (x) := \int_{0}^{\infty}e^{-t} \ t^{x - 1}dt$. 
Integrating in polar coordinates and using the
change of variables $t = r^2$ we get
\[
\left\Vert f_{m}\right\Vert _{a}^{2}
=
\frac{1}{\pi^{d}}\int_{\mathbb{C}^{d}
}\left\vert f_{m}\left(  z\right)  \right\vert ^{2}e^{-\left\vert z\right\vert
^{2}}dz=
\frac{1}{\pi^{d}}\int_{0}^{\infty}e^{-r^{2}}r^{2m+2d-1}dr\int
_{\mathbb{S}^{2d-1}}\left\vert f_{m}\left(  \eta\right)  \right\vert
^{2}d\eta
\]
\[
\le
C_d \ \Gamma (m + d) \max_{\theta\in\mathbb{S}^{2d-1}}\left\vert f_{m}\left(
\eta\right)  \right\vert.
\]
\end{proof}

Next we present the proof of our main result, Theorem  \ref{main}.

\vskip .2 cm

\begin{proof} In view of Theorem \ref{linealconj} we may suppose that $k\ge 2$.

Set $P = P_{k}-P_{\beta}- \cdots  -P_{0}$. 
Then $T_{P}\left(
f_{m}\right)  $ is either the zero polynomial or a polynomial of degree $<m$
(not necessarily homogeneous). Our strategy is to show that
\begin{equation}
g:=
{\displaystyle\sum_{m=0}^{\infty}}
T_{P}\left(  f_{m}\right)  \label{eq16}
\end{equation}
 defines an entire
function $g:\mathbb{C}^{d}\rightarrow\mathbb{C}$ of order bounded by $\rho$,
 by writing $g\left(  z\right)  =
{\displaystyle\sum_{M=0}^{\infty}}
G_{M}\left(  z\right)  ,$ where each $G_{M}$ is a homogeneous polynomial of
degree $M$, and then applying the criterion presented in (\ref{eqtaylor}). As a reminder, we mention that when the representation of $g$ as $\sum_{M=0}^{\infty}
G_{M}$ exists, it is
unique. 

It has been noted above that
\begin{equation}
T_{P}\left(  f_{m}\right)  =\sum_{j=-1}^{m}\sum_{s_{0}=0}^{k-1}\sum_{s_{1}%
=0}^{k-1}\cdots \sum_{s_{j}=0}^{k-1}TP_{s_{j}}\cdots  TP_{s_{0}}Tf_{m}, \label{eq17}%
\end{equation}
and clearly, $TP_{s_{j}}\cdots TP_{s_{0}}Tf_{m}$ is a homogeneous polynomial of
degree
\[
[s_{0}+\cdots +s_{j}+m-k\left(  j+2\right)]_+  .
\]
If $s_{0},\dots ,s_{n}\in\left\{  0,\dots , \beta \right\}  $ are given, where by definition $\beta \le k - 1$,  and if $n>m,$
then $TP_{s_{n}}\cdots TP_{s_{0}}Tf_{m}$ is zero by inspection of its degree:
\begin{align*}
m+s_{0}+\cdots +s_{n}-k\left(  n+2\right)   &  \leq m+\left(  n+1\right)
\left(  k-1\right)  -k\left(  n+2\right) \\
&  =m-k-n-1<0,
\end{align*}
so the $m$ in the first summatory of (\ref{eq17}) can be replaced by $\infty.$
By hypothesis, $P_{s}=0$ for all $s\in\left\{  \beta +1,\dots ,k-1\right\}  ,$ with the convention that this set is presented in increasing order, so it is empty when $\beta = k - 1$. Hence, 
we can write
\[
T_{P}\left(  f_{m}\right)  =\sum_{j=-1}^{\infty}\sum_{s_{0}=0}^{\beta}
\sum_{s_{1}=0}^{\beta}\cdots \sum_{s_{j}=0}^{\beta}TP_{s_{j}}\cdots TP_{s_{0}}Tf_{m}.
\]
In order to show that the sum in (\ref{eq16}) defines an entire function it suffices to prove that
\[
G=\sum_{j=-1}^{\infty}\sum_{s_{0}=0}^{\beta}\sum_{s_{1}=0}^{\beta}
\cdots \sum_{s_{j}=0}^{\beta}
{\displaystyle\sum_{m=0}^{\infty}}
TP_{s_{j}}\cdots TP_{s_{0}}Tf_{m}
\]
does so. Then the sum can be reordered and shown to be equal to $g.$ Next we
collect all summands having degree $M\geq0.$ The requirement
\[
\deg TP_{s_{j}}\cdots TP_{s_{0}}Tf_{m}=s_{0}+\cdots +s_{j}+m-k\left(  j+2\right)
=M
\]
means that $m=M+k\left(  j+2\right)  -\left(  s_{0}+\cdots +s_{j}\right)  ,$
and therefore we consider the sum

\begin{equation}\label{gm}
G_{M} (z) :=\sum_{j=-1}^{\infty}\sum_{s_{0}=0}^{\beta}\sum_{s_{1}=0}^{\beta}%
\cdots \sum_{s_{j}=0}^{\beta}
TP_{s_{j}}\cdots TP_{s_{0}}Tf_{M+k\left(  j+2\right)  -\left(  s_{0}+\cdots 
+s_{j}\right)  } (z).
\end{equation}
Next we show that $G_{M}$ converges absolutely everywhere.
Note that while $G_M$ will turn out to be a homogeneous polynomial of degree $M$, contributions to it may come from infinitely many values of $j$, so a proof of convergence is required. 

By Lemma \ref{LemShapiro}, for any complex vector $z\in\mathbb{C}^{d}$ and any
homogeneous polynomial $h_{M}$ of degree $M$ the following estimate holds:
\[
\left\vert h_{M}\left(  z\right)  \right\vert \leq\frac{1}{\sqrt{M!}
}\left\vert z\right\vert ^{M}\left\Vert h_{M}\right\Vert _{a}.
\]
Thus we have
\begin{equation} 	\label{ModApo}
\left\vert TP_{s_{j}}\cdots TP_{s_{0}}Tf_{M+k\left(  j+2\right)  -\left(
s_{0}+\cdots +s_{j}\right)  }\left(  z\right)  \right\vert 
\end{equation}
\begin{equation} 	\label{ModApo1}
 \leq\frac{\left\vert z\right\vert ^{M}}{\sqrt{M!}}\left\Vert TP_{s_{j}
}\cdots TP_{s_{0}}Tf_{M+k\left(  j+2\right)  -\left(  s_{0}+\cdots +s_{j}\right)
}\right\Vert _{a}.
\end{equation}
Since
\[
\left\Vert Tf_{m}\right\Vert _{a}\leq\frac{C}{m^{\tau/2}}\left\Vert
f_{m}\right\Vert _{a}
\]
and $P_{s_{j}}\cdots TP_{s_{0}}Tf_{M+k\left(  j+2\right)  -\left(  s_{0}
+\cdots +s_{j}\right)  }$ has degree
$
M+k$,
it follows that
\begin{equation}\label{sjm}
S_{j,M}  :=\left\Vert TP_{s_{j}}\cdots TP_{s_{0}}Tf_{M+k\left(  j+2\right)  -\left(
s_{0}+\cdots +s_{j}\right)  }\right\Vert _{a}
\end{equation}
\begin{equation}
\leq
\frac{C}{\left(  M+k\right)  ^{\tau/2}}\left\Vert P_{s_{j}
}\cdots TP_{s_{0}}Tf_{M+k\left(  j+2\right)  -\left(  s_{0}+\cdots +s_{j}\right)
}\right\Vert _{a}
\end{equation}
(note that $S_{j,M}$ depends also on $\left(  s_{0}+\cdots +s_{j}\right)$, but we omit this fact from the notation).
Now
\[
TP_{s_{j-1}}\cdots TP_{s_{0}}Tf_{M+k\left(  j+2\right)  -\left(  s_{0}
+\cdots +s_{j}\right)  }
\]
has degree
$
M+k-s_{j},
$
so using the abbreviation 
$$
D_{P}:=
{\displaystyle\sum_{\left\vert \alpha\right\vert =k}}
\left\vert c_{\alpha}\right\vert \sqrt{\alpha!},
$$
and, to reduce the number of subindices, writing 
$D_{s_{j}}:=D_{  P_{s_{j}}} $, from Lemma \ref{lemmaD}  we get
\[
\left\Vert P_{s_{j}}f_{m}\right\Vert _{a}\leq
D_{s_{j}} \left\Vert f_{m}\right\Vert _{a} \left(  m+1\right)  ^{s_{j}/2}.
\]
Hence we see
that
\[
S_{j,M} 
\leq CD_{s_{j}}\frac{\left(  M+k-s_{j}+1\right)  ^{s_{j}/2}}{\left(
M+k\right)  ^{\tau/2}}\left\Vert TP_{s_{j-1}}\cdots TP_{s_{0}}Tf_{M+k\left(
j+2\right)  -\left(  s_{0}+\cdots +s_{j}\right)  }\right\Vert _{a}.
\]
Proceed inductively to obtain positive numbers $A_{j}
,\dots, A_{0}$ and $B_{j},\dots ,B_{-1}$ such that
	\[
S_{j,M} \leq\frac{C^{j+1}D_{s_{j}}\cdots D_{s_{0}}A_{j}^{s_{j}/2}A_{j-1}^{s_{j-1}
		/2}\cdots A_{0}^{s_{0}/2}\left\Vert f_{M+k\left(  j+2\right)  -\left(
		s_{0}+\cdots+s_{j}\right)  }\right\Vert _{a}}{B_{j}^{\tau/2}B_{j-1}^{\tau
		/2}\cdots B_{-1}^{\tau/2}},
\]
where
$B_{j}=M+k$, 
$
B_{n-1}=B_{n}+k-s_{n}
$
 for  $n =j,j-1,\dots , 0$, and
\[
A_{n}= B_{n}-s_{n}+1  =  B_{n - 1}- k + 1 \text{ for } n =j,j-1,\dots, 0.
\]
Thus 
$B_{j-1}   = B_{j}+k-s_{j} = M+2k-s_{j}$, and in general we have 
\[
B_{j-n} = M+\left(  n +1\right)  k-\left(  s_{j}+s_{j-1}+\cdots
+s_{j-n+1}\right);
\]
note that the largest term is the last one:
\[
B_{-1} = M+\left(  j+2\right)  k-\left(  s_{j}+s_{j-1}+\cdots
+s_{0}\right) = m.
\]
Now $A_{n} < B_{n - 1}$,
	so  $A_{n}^{s_n}\leq B_{n - 1}^{s_{n}}$ for all $s_n$, where $0 \le s_{n}\leq \beta$.
	It follows that
	\[
	S_{j,M} 
	\leq C^{j+1}D_{s_{j}}\cdots D_{s_{0}} B_{j-1}^{(s_{j}  -\tau)/2}\cdots B_{-1}^{(s_{0}  -\tau) /2}\left\Vert f_{M+k\left(  j+2\right)  - \sum_{n=0}^{j } s_{n}  }\right\Vert_{a}
	\]
	\[
	\leq
	C^{j+1}D_{s_{j}}\cdots D_{s_{0}}  B_{-1}^{2^{-1} \sum_{n=0}^{j } (s_{n}  -\tau)}\left\Vert f_{M+k\left(  j+2\right)  -\sum_{n=0}^{j } s_{n}  }\right\Vert_{a}.
	\]
By Lemma \ref{Bargma}, there is a dimensional constant $C_d$ such that
	\[
	\left\Vert f_{m}\right\Vert _{a}
	\le
	C_d \ \sqrt{\left( m+d-1\right) !} \max_{\eta\in\mathbb{S}^{2d-1}}\left\vert
	f_{m}\left(  \eta\right)  \right\vert,
	\]
	so we have
$$
		\left\Vert f_{M+k\left(  j+2\right)  -\sum_{n=0}^{j } s_{n}
		}\right\Vert _{a}  
		\leq
		C_d \ \sqrt{\left( B_{-1}  +d-1\right)  !} 
\max_{\eta\in\mathbb{S}^{2d-1}}\left\vert f_{B_{-1}}\left(  \eta\right)  \right\vert.
$$
Note that for every $M$ sufficiently large and all $j$, or for every $j$ sufficiently large
and all $M$,
$$
\frac{\left( B_{-1}  +d-1\right)  ! }{M  !}
\le
\left( B_{-1}  +d-1\right)^{ B_{-1} + d - 1 - M}
$$
$$
\le
\left( B_{-1}\right)^{ B_{-1} + d - 1 - M} 
\left( 1  + \frac{d-1}{B_{-1}}\right)^{ B_{-1} + d - 1 - M}
$$
$$
\le
e^{d^2} \left( B_{-1}\right)^{ B_{-1} + d - 1  - M},
$$
so
$$
\frac{ B_{-1}^{2^{-1} \sum_{n=0}^{j } (s_{n}  -\tau)} \sqrt{\left( B_{-1}  +d-1\right)  !} }{\sqrt{M  !}}
$$
$$
\le
B_{-1}^{2^{-1} \sum_{n=0}^{j } (s_{n}  -\tau)} e^{d^2}  B_{-1}^{ 2^{-1} \left(M+\left(  j+2\right)  k- \sum_{n=0}^{j } s_{n}  + d -1  - M \right)}
$$
$$
=
e^{d^2}  B_{-1}^{ 2^{-1} \left(k + d - 1 + \left(  j+1\right)  (k- \tau)  \right)},
$$
from whence it follows that
$$
\label{eqMAINNEW}
\frac{S_{j,M} }{\sqrt{M!}} 
		\leq
e^{d^2}  C_d \ C^{j+1}D_{s_{j}}\cdots D_{s_{0}} B_{-1}^{2^{-1} (k + d -1  + (j + 1)(k  -\tau))} 
 \max_{\eta\in\mathbb{S}^{2d-1}}\left\vert f_{B_{-1} }\left(  \eta\right)  \right\vert.
$$	
	Since $f$ has order $\rho$, the bound (\ref{eqtaylor}) entails that for every
	$\varepsilon>0$ sufficiently small there exists a constant $A_{\varepsilon}$
	such that
	\begin{equation}
		\max_{\eta\in\mathbb{S}^{2d-1}}\left\vert f_{m}\left(  \eta\right)
		\right\vert \leq\frac{A_{\varepsilon}}{m^{\frac{m}{\rho+\varepsilon}}}
		\label{eqaa}
	\end{equation}
	for all natural numbers $m.$ Using (\ref{eqaa}) and then replacing $B_{ -1 }^{- \frac{B_{ -1 } }{\rho+\varepsilon}}$ with the larger quantity
	$B_{ -1 }^{- \frac{ M + k + (j + 1)(k  -\beta))}{\rho+\varepsilon}}$, we get
	\begin{equation} 	\label{fixM}
\frac{S_{j,M}}{\sqrt{M!}} 
	\leq
	e^{d^2}   C_d A_{\varepsilon}\ C^{j+1}D_{s_{j}}\cdots D_{s_{0}} B_{-1}^{\frac{(k + d - 1 + (j + 1)(k  -\tau))}{2}} B_{ -1 }^{- \frac{B_{ -1 } }
		{\rho+\varepsilon}}
	\end{equation}
	\begin{equation}
	\label{fixM0}
	\le 
	e^{d^2}   C_d A_{\varepsilon}\ C^{j+1}D_{s_{j}}\cdots D_{s_{0}} B_{-1}^{\frac{(j + 1)(k  -\tau)}{2}}B_{ -1 }^{- \frac{(j + 1)(k  -\beta)}{\rho+\varepsilon}} B_{-1}^{\frac{(k + d - 1)}{2}} B_{ -1 }^{- \frac{ M + k}{\rho+\varepsilon}}.
	\end{equation}
From the hypothesis $\rho<\frac{2(k-\beta)}{k-\tau}
$, by selecting $\varepsilon$ small enough we conclude that
$$
\frac{k-\tau}{2} < \frac{k-\beta}{\rho+  \varepsilon}.
$$
Thus,  replacing $ B_{-1}$ in  $B_{-1}^{\frac{k  -\tau}{2} - \frac{k  -\beta}{\rho+\varepsilon}}$
by something smaller, namely
$$
M + k + (j + 1)(k - \beta),
$$
we get a larger quantity in (\ref{fixM}), to wit,
	\begin{equation}
	\label{fixM1}
\frac{S_{j,M}}{\sqrt{M!}} 
\leq 
e^{d^2}   C_d A_{\varepsilon}\ C^{j+1}D_{s_{j}}\cdots D_{s_{0}} 
\left(\left(M + k + (j + 1)(k - \beta)\right)^{\frac{k  -\tau}{2} - \frac{k  -\beta}{\rho+\varepsilon}} \right)^{j + 1} 
\end{equation}
	\begin{equation} 	\label{fixM2}
\times B_{-1}^{\frac{(k + d - 1)}{2}} B_{ -1 }^{- \frac{ M + k}{\rho+\varepsilon}}.
	\end{equation}
We can make the same substitution in $B_{-1}^{\frac{(k + d - 1)}{2}} B_{ -1 }^{- \frac{ M + k}{\rho+\varepsilon}}
$ when
$\frac{(k + d - 1)}{2} \le  \frac{ M + k}{\rho+\varepsilon},
$ 
and use $
B_{-1} \le M + (j + 2)k 
$ when
$\frac{(k + d - 1)}{2} >  \frac{ M + k}{\rho+\varepsilon}.
$ In either case, the $j + 1$-root of the 
corresponding quantity approaches 1 as $j \to \infty$, so for $M \in \N_0$ fixed
and $j$ sufficiently large we obtain
	\begin{equation}\label{fixM3}
B_{-1}^{\frac{(k + d - 1)}{2}} B_{ -1 }^{- \frac{ M + k}{\rho+\varepsilon}}
\le 2^{j + 1}
\end{equation}
for all choices of $s_0, \dots, s_j$ between $0$ and $\beta$.

\vskip .2cm

After these preliminary bounds, we prove that the sum in (\ref{gm}) is well defined and converges absolutely for every $z\in \C^d\setminus \{0\}$ and every natural number $M$. So fix $M$ and set $z = r \eta$, where $r > 0$ and $\eta \in \mathbb{S}^{2d-1}$. 
Let us write
\[
G_{M}^{\left(  j\right)  }\left(  r \eta\right)  := r^M \sum_{s_{0}=0}^{\beta}
\sum_{s_{1}=0}^{\beta}\cdots \sum_{s_{j}=0}^{\beta}\left\vert TP_{s_{j}}....TP_{s_{0}%
}Tf_{M+k\left(  j+2\right)  -\left(  s_{0}+\cdots+s_{j}\right)  }\left(
\eta\right)  \right\vert,
\]
and $\widetilde{D}:=D_{s_{0}}+\cdots+D_{s_{\beta}}.$ Note that we have
\[
\sum_{s_{0}=0}^{\beta}\sum_{s_{1}=0}^{\beta}\cdots\sum_{s_{j}=0}^{\beta}D_{s_{j}}\cdots
D_{s_{0}}=\left(  D_{s_{0}}+\cdots+D_{s_{\beta}}\right)  ^{j+1}=\widetilde
{D}^{j+1}.
\]
Putting together (\ref{fixM1})-(\ref{fixM2}) with the bound from (\ref{fixM3}), and using (\ref{ModApo})-(\ref{ModApo1}), we obtain
for sufficiently large values of $j$,
$$
G_{M}^{\left(  j\right)  }\left(  r \eta\right)  
\leq
r^M	e^{d^2}   C_d A_{\varepsilon}\cdot\left( 
(M + k + (j + 1)(k - \beta))^{\frac{k - \tau}{2} - \frac{k - \beta} {\rho+\varepsilon}} 2 C \widetilde{D} \right)
^{j+1}
$$
$$
\leq
\left(( 
(M + k + (j + 1)(k - \beta))^{\frac{k - \tau}{2} - \frac{k - \beta} {\rho+\varepsilon}} 3 C \widetilde{D} \right)
^{j+1}.
$$
Hence,  we can find a natural number $j_{0}$ such that for all
$j\geq j_{0}$ the last inequality is satisfied and furthermore
\[
(M + k + (j + 1)(k - \beta))^{\frac{k - \tau}{2} - \frac{k - \beta} {\rho+\varepsilon}} 3 C \widetilde{D}
\leq
\frac{1}{2}.
\]
It follows that $
{\displaystyle\sum_{j=0}^{\infty}}
G_{M}^{\left(  j\right)  }$ converges on $\C^d$, so $G_{M}$ is a well-defined homogeneous polynomial for each
$M\in\mathbb{N}_{0}.$

Next we show that for sufficiently large values of $M$, the polynomial $G_M$ on the unit sphere satisfies the bounds given in (\ref{eqtaylor}). Recall that
$$
|G_{M}(\eta)| 
\le
{\displaystyle\sum_{j=0}^{\infty}}
G_{M}^{\left(  j\right)}   (\eta).
$$ 
We want to estimate $G_{M}^{\left(  j\right)}   (\eta)$ for every $j\in\mathbb{N}_{0}$, under the assumption that $M$ is ``large''. It follows from our previous discussion that whenever $\frac{(k + d - 1)}{2} <  \frac{ M + k}{\rho+\varepsilon},
$ and $M$ is sufficiently large, we have 
$$
G_{M}^{\left(  j\right)}   (\eta)
\le
\left(\frac{\left( e^{d^2}   C_d A_{\varepsilon}  \right)^{1/(j+1)}  C\widetilde{D} 
}{\left(  M+k+\left(  k-\beta\right)  \left(  j+1\right)
	\right)  ^{\frac{k-\beta}{\rho+\varepsilon}-\frac{k-\tau}{2}}}\right)^{j+1} 
$$
$$
\times 
\left(  M+k+\left(  k-\beta\right)  \left(  j+1\right)
	\right)^{\frac{(k + d - 1)}{2} - \frac{ M + k}{\rho+\varepsilon}}.
$$
We may assume that $ e^{d^2}   C_d A_{\varepsilon} \ge 1$ (otherwise we remove the term from the inequality). Then
$$
G_{M}^{\left(  j\right)}   (\eta)
\le
\left(\frac{\left( e^{d^2}   C_d A_{\varepsilon}  \right)  C\widetilde{D} 
}{\left(  M+k
	\right)  ^{\frac{k-\beta}{\rho+\varepsilon}-\frac{k-\tau}{2}}}\right)^{j+1} 
\left(  M+k
	\right)^{\frac{(k + d - 1)}{2} - \frac{ M + k}{\rho+\varepsilon}}.
$$
Thus we can select $M_0$ so large  that for all
$M\geq M_{0}$, 
$$
\frac{\left( e^{d^2}   C_d A_{\varepsilon}  \right)  C\widetilde{D} 
}{\left(  M+k
	\right)  ^{\frac{k-\beta}{\rho+\varepsilon}-\frac{k-\tau}{2}}}
\le
\frac{1}{2}
$$
and additionally, so that for all
$M\geq M_{0}$ the last inequality below is satisfied:
$$
|G_{M}(\eta)| 
\le
{\displaystyle\sum_{j=0}^{\infty}}
G_{M}^{\left(  j\right)}   (\eta)
\le
2 \left(  M+k
	\right)^{\frac{(k + d - 1)}{2} - \frac{ M + k}{\rho+\varepsilon}}
\le
M^{ - \frac{ M }{\rho+2 \varepsilon}}.
$$ 
\end{proof}

	\section{Fischer decompositions on certain Banach spaces of entire functions}
	
	As in \cite{KhSh92}, we use  $\Lambda $ to denote the set of all decreasing sequences 
	$
	\lambda =\left( \lambda _{m}\right) _{m\in \mathbb{N}_{0}}$ of positive
	numbers $\lambda _{m} \le 1$ which converge 	 to $0.$ Then $B_{\lambda }$ is defined
	as the space of all entire functions $f$ on $\mathbb{C}^{n}$ such that the
	homogeneous expansion $f=\sum_{m=0}^{\infty }f_{m}$ satisfies 
	\begin{equation}\label{Blambda}
		\frac{\left\Vert f_{m}\right\Vert _{a}}{m^{m/2}\left( \lambda _{m}\right)
			^{m}}\rightarrow 0.
	\end{equation}
	One can show that $B_{\lambda }$ is a Banach space with respect to the norm%
	\begin{equation}\label{Blambdanorm}
		\left\Vert f\right\Vert _{\lambda }:=\sup_{m\in \mathbb{N}_{0}}\frac{%
			\left\Vert f_{m}\right\Vert _{a}}{m^{m/2}\lambda _{m}^{m}}.
	\end{equation}
Actually, the assumption  $\lambda_m \le 1$ is not made in \cite{KhSh92}, but it will be convenient for us later on, so we include it into the definition; clearly replacing a decreasing sequence 
$
\lambda =\left( \lambda _{m}\right) _{m\in \mathbb{N}_{0}}$ with the decreasing sequence 
$
\lambda\wedge 1 :=\left( \lambda _{m}\wedge 1 \right) _{m\in \mathbb{N}_{0}}$ 
leads to the same space $B_\lambda$, with a comparable norm.

	Motivated by an anonymous referee's comments, we show that certain modifications of the preceding arguments  allow us to partially deal with a
	question formulated in \cite[Remark 5.2]{KhSh92}: can the \emph{uniqueness} assumption  in \cite[Theorem 3]{KhSh92} be omitted? The answer is yes. We will assume that the sequence $\lambda$ converges to 0 sufficiently fast, and more precisely, that condition   (\ref{eqconverging0}) below is satisfied. This condition adapts hypothesis (4.10) of \cite[Lemma 12]{KhSh92} to the more general setting considered here, where $\tau$ can be larger than 1 and $\beta$ larger than 0.
	
	\begin{theorem}
		Let $P_{k}$ be a homogeneous polynomial of degree $k>0$ on $\mathbb{C}^{d}$,
		and let us write $T:=T_{P_{k}}$. Suppose that there exist a $C>0$ and a $\tau
		\in \{0,\dots ,k\}$ such that for every $m>0$ and every homogeneous
		polynomial $f_{m}$ of degree $m,$ the following inequality holds: 
		\begin{equation*}
			\left\Vert Tf_{m}\right\Vert _{a}\leq \frac{C}{m^{\tau /2}}\left\Vert
			f_{m}\right\Vert _{a}.
		\end{equation*}%
		Assume that for $0\leq j<k$ the polynomials $P_{j}\left( z\right) $ are
		homogeneous of degree $j$, and for some $\beta <k$ and every $j$ with $\beta
		<j<k$ we have $P_{j}=0$. Let $\lambda \in \Lambda $ be such that 	
		\begin{equation}
			\lim_{m\rightarrow \infty }m^{\frac{(k-\tau )}{2}}\lambda _{m}^{(k-\beta
				)}=0.  \label{eqconverging0}
		\end{equation}

		Then for every $f\in B_{\lambda }$, there exist $q\in B_{\lambda }$ and  $r$ 
		entire  such that 
		\begin{equation*}
			f=\left( P_{k}-P_{\beta }-\cdots -P_{0}\right) q+r\text{ and }P_{k}^{\ast
			}\left( D\right) r=0.
		\end{equation*}
	\end{theorem}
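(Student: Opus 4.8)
The plan is to run the argument of Theorem \ref{main} again, changing only the one ingredient that is specific to entire functions of finite order. Write $P=P_{k}-P_{\beta}-\cdots-P_{0}$ and let $f=\sum_{m\ge 0}f_{m}$ be the homogeneous expansion of the given $f\in B_{\lambda}$. Set
\[
g:=\sum_{m=0}^{\infty}T_{P}(f_{m}),
\]
where $T_{P}(f_{m})$ is the polynomial Fischer quotient of $f_{m}$ by $P$ relative to $P_{k}^{\ast}$ — this makes sense because $(P_{k},P_{k}^{\ast})$ is a Fischer pair for $\mathcal{P}(\C^{d})$ by Shapiro's theorem, hence so is $(P,P_{k}^{\ast})$ by \cite[Theorem 17]{Rend08} — and it is given by the iterated formula (\ref{eqinduct}). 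Expanding each $T_{P}(f_{m})$ by (\ref{eqinduct}), cutting the ranges of the $s_{n}$ down to $\{0,\dots,\beta\}$ via $P_{s}=0$ for $\beta<s<k$, and collecting all summands of a fixed homogeneous degree $M$ — exactly as in the proof of Theorem \ref{main} — one is led to $G_{M}:=\sum_{j=-1}^{\infty}\sum_{0\le s_{0},\dots,s_{j}\le\beta}TP_{s_{j}}\cdots TP_{s_{0}}Tf_{m}$, with $m=M+(j+2)k-(s_{0}+\cdots+s_{j})$. Thus two things must be checked: (i) for each fixed $M$ this series converges absolutely, so $G_{M}$ is a well-defined homogeneous polynomial of degree $M$; and (ii) $\left\Vert G_{M}\right\Vert_{a}/(M^{M/2}\lambda_{M}^{M})\to 0$ as $M\to\infty$, so that $g=\sum_{M}G_{M}$ lies in $B_{\lambda}$. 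Granting both, put $q:=g$ and $r:=f-Pg$; then $r$ is entire, and since at the polynomial level $f_{m}=P\,T_{P}(f_{m})+r_{m}$ with $P_{k}^{\ast}(D)r_{m}=0$, summing the locally uniformly convergent series $\sum_{m}f_{m}=f$ and $\sum_{m}PT_{P}(f_{m})=Pg$ gives $r=\sum_{m}r_{m}$ locally uniformly, whence $P_{k}^{\ast}(D)r=\sum_{m}P_{k}^{\ast}(D)r_{m}=0$ by term-by-term differentiation of a locally uniformly convergent series of holomorphic functions. This is precisely the concluding mechanism of Theorem \ref{main}.

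For the estimates I would follow the proof of Theorem \ref{main} verbatim, up to and including the upper bound derived there for $S_{j,M}:=\left\Vert TP_{s_{j}}\cdots TP_{s_{0}}Tf_{m}\right\Vert_{a}$ in terms of $\left\Vert f_{m}\right\Vert_{a}$ and powers of the intermediate degrees — a bound resting only on the hypothesis $\left\Vert Tf_{\ell}\right\Vert_{a}\le C\ell^{-\tau/2}\left\Vert f_{\ell}\right\Vert_{a}$, on Lemma \ref{lemmaD}, and on the telescoping identities for $\deg h_{n}$. The single change is that, where Theorem \ref{main} invokes Lemma \ref{Bargma} to pass to the unit sphere and then uses the order characterisation (\ref{eqtaylor}) to bound $\max_{\eta}|f_{m}(\eta)|$, one instead substitutes the defining inequality of $B_{\lambda}$, namely $\left\Vert f_{m}\right\Vert_{a}\le\left\Vert f\right\Vert_{\lambda}\,m^{m/2}\lambda_{m}^{m}$. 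Writing $m=M+\ell$ with $\ell=(j+2)k-(s_{0}+\cdots+s_{j})=k+\sum_{n=0}^{j}(k-s_{n})\ge k+(j+1)(k-\beta)$, so that $m\ge M+k$, and using that $\lambda$ is decreasing with $\lambda_{m}\le 1$ together with $m^{M/2}/M^{M/2}\le e^{\ell/2}$, one reorganises $S_{j,M}/(M^{M/2}\lambda_{M}^{M})$ into a product of a factor of the shape $(m^{(k-\tau)/2}\lambda_{m}^{\,k-\beta})^{j+1}$, which is the engine of the geometric decay in $j$ since the hypothesis (\ref{eqconverging0}) says precisely that $m^{(k-\tau)/2}\lambda_{m}^{\,k-\beta}\to 0$ as $m\to\infty$, and a residual factor, consisting of a fixed polynomial in $m$ together with a power of $\lambda_{m}$ of exponent at least $M+k$ (the powers of $\lambda_{m}$ left unused after feeding the geometric factor): this residual plays the role that $B_{-1}^{(k+d-1)/2-(M+k)/(\rho+\varepsilon)}$ plays at the end of the proof of Theorem \ref{main}, its large power of $\lambda_{m}$ forcing it to $0$ as $M\to\infty$. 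The role of the hypothesis $\rho(k-\tau)<2(k-\beta)$ of Theorem \ref{main} is taken over by (\ref{eqconverging0}); the two conditions coincide for $\lambda_{m}\asymp m^{-1/\rho}$, and in general $m^{m/2}$ is simply absorbed into the normalisation $M^{M/2}$ while $\lambda_{m}^{m}$ supplies the decay.

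The real work is in point (ii), and the main obstacle is the same one that has to be negotiated in Theorem \ref{main}: the factors $(\deg+1)^{s_{n}/2}$ coming from Lemma \ref{lemmaD} and the powers of $\lambda_{m}$ must not be estimated separately and crudely, because the identity $m=M+(j+2)k-\sum_{n}s_{n}$ couples them — a large $s_{n}$ both enlarges a degree factor and, by lowering $m$, liberates an extra power of $\lambda_{m}$. One must carry the telescoping through to the end exactly as in the proof of Theorem \ref{main}, not bound each $s_{n}$ by $\beta$ in isolation; this is what confines the $j$-dependence to the geometric factor and keeps the residual $M$-dependent factor not merely bounded but $o(1)$ as $M\to\infty$ — which is exactly what upgrades ``$g$ entire'' to ``$g\in B_{\lambda}$''. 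Once the estimate is organised in this way, (i) follows by fixing $M$, choosing $\varepsilon$ small, and taking $j$ beyond a threshold $j_{0}(M)$ so that the geometric ratio, times $C$ and $\widetilde{D}:=\sum_{s=0}^{\beta}D_{P_{s}}$, is $<\tfrac12$; and (ii) follows by choosing $M$ beyond a threshold $M_{0}$ so that simultaneously the geometric ratio is small uniformly in $j$ and the residual $M$-factor is $<1$. These final steps are routine and are carried out in the same way as in Theorem \ref{main}.
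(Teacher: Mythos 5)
Your overall architecture is exactly the paper's: define $g=\sum_m T_P(f_m)$, regroup by homogeneous degree into $G_M$, prove (i) absolute convergence of each $G_M$ for fixed $M$ and (ii) $\Vert G_M\Vert_a/(M^{M/2}\lambda_M^M)\to 0$, with the defining inequality $\Vert f_m\Vert_a\le \Vert f\Vert_\lambda\, m^{m/2}\lambda_m^m$ replacing Lemma \ref{Bargma} and (\ref{eqtaylor}). Part (i) as you describe it goes through. But there is a concrete gap in part (ii), at the one place where this proof cannot be "Theorem \ref{main} verbatim". The bound for $S_{j,M}$ that the proof of Theorem \ref{main} actually derives and uses, namely $S_{j,M}\le C^{j+1}D_{s_j}\cdots D_{s_0}\,B_{-1}^{2^{-1}\sum_{n}(s_n-\tau)}\Vert f_{B_{-1}}\Vert_a$, has already discarded the leftover factor $B_j^{-\tau/2}=(M+k)^{-\tau/2}$ coming from the outermost application of the hypothesis (\ref{eqT}) (there are $j+2$ denominators $B_j,\dots,B_{-1}$ but only $j+1$ numerators $A_j,\dots,A_0$ to pair them with, and in Theorem \ref{main} the unpaired $B_j^{-\tau/2}$ is harmlessly estimated by $1$). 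If you import that bound unchanged, then after cancelling $\lambda_{B_{-1}}^{M}$ against the $\lambda_M^M$ in the denominator and feeding $(B_{-1}^{(k-\tau)/2}\lambda_{B_{-1}}^{k-\beta})^{j+1}$ to the geometric factor, the residual is $B_{-1}^{k/2}\lambda_{B_{-1}}^{k}$, not $B_{-1}^{(k-\tau)/2}\lambda_{B_{-1}}^{k}$, and hypothesis (\ref{eqconverging0}) does not force $m^{k/2}\lambda_m^k\to 0$ when $\tau>0$: take $k=2$, $\tau=1$, $\beta=0$, $\lambda_m\asymp m^{-1/3}$, for which $m^{1/2}\lambda_m^2\to 0$ but $m\,\lambda_m^2\asymp m^{1/3}\to\infty$. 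The failure already occurs at the single term $j=-1$: one needs $(M+k)^{-\tau/2}\Vert f_{M+k}\Vert_a/(M^{M/2}\lambda_M^M)\to 0$, and without retaining that $(M+k)^{-\tau/2}$ the quantity need not tend to $0$. The fix is exactly the point the paper flags: in part (ii) keep the extra factor $1/B_j^{\tau/2}$, absorb the mismatch $B_{-1}^{\tau/2}/(M+k)^{\tau/2}\le 2^{(j+1)\tau/2}$ into the geometric factor, and then the residual $B_{-1}^{(k-\tau)/2}\lambda_{B_{-1}}^{k}\le B_{-1}^{(k-\tau)/2}\lambda_{B_{-1}}^{k-\beta}\to 0$ by (\ref{eqconverging0}).

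Relatedly, your stated reason that the residual tends to $0$ --- that it carries ``a power of $\lambda_m$ of exponent at least $M+k$'' whose largeness forces decay --- is not right as written: the quantity being controlled is normalized by $\lambda_M^M$, and since $\lambda$ is decreasing the best one gets from those $M+k$ powers is $(\lambda_{B_{-1}}/\lambda_M)^M\lambda_{B_{-1}}^k\le\lambda_{B_{-1}}^k$. So only $k$ powers of $\lambda$ survive to fight the polynomial factor in $m$, which is precisely why the exponent of that polynomial factor must be $(k-\tau)/2$ rather than $k/2$, and why the discarded $(M+k)^{-\tau/2}$ has to be recovered. With that correction your argument closes and coincides with the paper's proof.
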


	It is not clear to us whether
	$\left( P_{k}-P_{\beta }-\cdots -P_{0}\right) q$, and hence $r$, must be in $B_\lambda$. The question whether $r \in B_\lambda$ seems to be  a delicate one: it is not neccesarily true that the  product of a function in $B_\lambda$ with a polynomial must again be in $B_\lambda$.

	\begin{proof}
		We proceed as in the proof of Theorem \ref{main}: our strategy is to show that 
		\begin{equation}
			g:={\displaystyle\sum_{m=0}^{\infty }}T_{P}\left( f_{m}\right)  \label{eq16a}
		\end{equation}%
		defines an entire function $g:\mathbb{C}^{d}\rightarrow \mathbb{C}$ which belongs to $
		B_{\lambda },$ by writing $g\left( z\right) ={\displaystyle
			\sum_{M=0}^{\infty }}G_{M}\left( z\right)$, where each $G_{M}$ is a
		homogeneous polynomial of degree $M$, and then proving that $\|g\|_\lambda < \infty$ by showing that 
		\begin{equation*}
			\frac{\left\Vert G_{M}\right\Vert _{a}}{M^{M/2}\left( \lambda _{M}\right)
				^{M}}\rightarrow 0.
		\end{equation*}
	
		It has been noted in the proof of Theorem \ref{main} that 
		\begin{equation*}
			T_{P}\left( f_{m}\right) =\sum_{j=-1}^{\infty }\sum_{s_{0}=0}^{\beta
			}\sum_{s_{1}=0}^{\beta }\cdots \sum_{s_{j}=0}^{\beta }TP_{s_{j}}\cdots
			TP_{s_{0}}Tf_{m}.
		\end{equation*}%
		In order to show that the sum in (\ref{eq16a}) defines an entire function it
		suffices to prove that 
		\begin{equation*}
			G=\sum_{j=-1}^{\infty }\sum_{s_{0}=0}^{\beta }\sum_{s_{1}=0}^{\beta }\cdots
			\sum_{s_{j}=0}^{\beta }{\displaystyle\sum_{m=0}^{\infty }}TP_{s_{j}}\cdots
			TP_{s_{0}}Tf_{m}
		\end{equation*}%
		does so. Then the sum can be reordered and shown to be equal to $g.$ As
		before (cf. \ref{gm}) we collect all summands having degree $M\geq 0$ and we consider the
		sum
		
		\begin{equation}
			G_{M}(z):=\sum_{j=-1}^{\infty }\sum_{s_{0}=0}^{\beta }\sum_{s_{1}=0}^{\beta
			}\cdots \sum_{s_{j}=0}^{\beta }TP_{s_{j}}\cdots TP_{s_{0}}Tf_{M+k\left(
				j+2\right) -\left( s_{0}+\cdots +s_{j}\right) }(z).  \label{gma}
		\end{equation}%
		Next we show that $G_{M}$ converges absolutely everywhere. As in the
		proof of Theorem \ref{main} and with the same notation (cf. \ref{sjm}) setting  	$$
		S_{j,M}:=\left\Vert TP_{s_{j}}\cdots TP_{s_{0}}Tf_{M+k\left( j+2\right)
			-\left( s_{0}+\cdots +s_{j}\right) }\right\Vert _{a}$$ 
		we see that 
		\begin{equation}\label{apolyisapoly}
			\left\vert G_{M}\left( z\right) \right\vert \leq \frac{\left\vert
				z\right\vert ^{M}}{\sqrt{M!}}\sum_{j=-1}^{\infty }\sum_{s_{0}=0}^{\beta
			}\sum_{s_{1}=0}^{\beta }\cdots \sum_{s_{j}=0}^{\beta }S_{j,M}.
		\end{equation}
		Thus (\ref{gma}) converges absolutely to a homogeneous polynomial of degree $M$ if 
		\begin{equation}
		 \sum_{j=-1}^{\infty
			}\sum_{s_{0}=0}^{\beta }\sum_{s_{1}=0}^{\beta }\cdots \sum_{s_{j}=0}^{\beta
			}S_{j,M}  \label{ggmm}.
		\end{equation}
		converges.  As in the proof of Theorem \ref{main} (and with the same notation) we have
		\begin{equation*}
			S_{j,M}\leq C^{j+1}D_{s_{j}}\cdots D_{s_{0}} \ B_{-1}^{2^{-1}
				\sum_{n=0}^{j}(s_{n}-\tau )} \left\Vert f_{M+k\left(
				j+2\right) -\sum_{n=0}^{j}s_{n}}\right\Vert _{a}.
		\end{equation*}
		 Recall that 
	\begin{equation*}
		B_{-1}=M+k\left( j+2\right) -\sum_{n=0}^{j}s_{n}\geq M+k  + \left( k-\beta
		\right) \left( j+1\right).
	\end{equation*}
Let us fix $M \ge 0$ and write	$\widetilde{D}:=D_{s_{0}}+\cdots+D_{s_{\beta}}$. We want to prove the convergence of (\ref{ggmm}). Now
\begin{equation*}
	S_{j,M}
\leq
 C^{j+1} D_{s_{j}}\cdots D_{s_{0}} \ 
	B_{-1}^{2^{-1}\sum_{n=0}^{j}(s_{n}-\tau )} \ \frac{
		\left\Vert f_{B_{-1}}\right\Vert_{a}}
	{B_{-1}^{B_{-1}/2}\lambda_{B_{-1}}^{B_{-1}}} \ 
	B_{-1}^{B_{-1}/2} 
	\lambda_{B_{-1}}^{B_{-1}}.
\end{equation*}
\begin{equation*}
=
	C^{j+1} D_{s_{j}}\cdots D_{s_{0}} \ 
	\frac{\left\Vert f_{B_{-1}}\right\Vert_{a}}
	{B_{-1}^{B_{-1}/2}\lambda_{B_{-1}}^{B_{-1}}} \ 
	B_{-1}^{2^{-1} ( M+k  + \left( k-\tau
		\right) \left( j+1\right))} 
	\lambda_{B_{-1}}^{B_{-1}}.
\end{equation*}
Since  $\lambda_m \le 1$ for every $m$, and $f \in B_\lambda$, it follows that 
\begin{equation*}
\sum_{s_{0}=0}^{\beta }\sum_{s_{1}=0}^{\beta }\cdots \sum_{s_{j}=0}^{\beta} 	S_{j,M}
\le	
C^{j+1} \widetilde{D}^{j+1} \ 
\left\Vert f\right\Vert _{\lambda } \ 
B_{-1}^{2^{-1} ( M+k  + \left( k-\tau
	\right) \left( j+1\right))} 
\lambda_{B_{-1}}^{ M+k  + \left( k-\beta
	\right) \left( j+1\right)}.
\end{equation*}
Recalling the  hypothesis 
$
	\lim_{m\rightarrow \infty }m^{\frac{(k-\tau )}{2}}\lambda _{m}^{(k-\beta
		)}=0
	$, cf. (\ref{eqconverging0}), 
we see that
	\begin{equation*}
	\lim_{j\rightarrow \infty } B_{-1}^{2^{-1} \left(\frac{M+k}{j + 1}  + \left( k-\tau
		\right)\right)} 
	\lambda_{B_{-1}}^{\frac{M+k}{j + 1}  + \left( k-\beta
		\right)} =0, 
\end{equation*}
since 
$\lim_{j\rightarrow \infty } B_{-1}^{ \frac{1}{j + 1}} 
 =1$
  and 
  $\sup_{j} 
\lambda_{B_{-1}}^{\frac{1}{j + 1}} \le 1$.
Choosing $J\gg 1$ so that for every $j \ge J$ we have 
$$
C \widetilde{D} \  B_{-1}^{2^{-1} \left(\frac{M+k}{j + 1}  + \left( k-\tau
	\right)\right)} 
\lambda_{B_{-1}}^{\frac{M+k}{j + 1}  + \left( k-\beta
	\right)} < 1/2,
$$
it becomes clear that the series (\ref{ggmm}) converges,
and hence, that each $G_{M}$ is a
homogeneous polynomial of degree $M$. 
Once we know $G_{M}$ is a  polynomial, we can estimate its apolar norm using (\ref{gma}) and the triangle inequality:
\begin{equation}
	\left\Vert G_{M}\right\Vert _{a}
	\le	
	\sum_{j=-1}^{\infty
	}\sum_{s_{0}=0}^{\beta }\sum_{s_{1}=0}^{\beta }\cdots \sum_{s_{j}=0}^{\beta
	}S_{j,M}.
\end{equation}
The next step consists in  showing that 
\begin{equation*}
	\frac{\left\Vert G_{M}\right\Vert _{a}}{M^{M/2}\left( \lambda _{M}\right)
		^{M}}\rightarrow 0.
\end{equation*}
Arguing as in the proof of Theorem \ref{main}  we obtain
\begin{equation*}
	S_{j,M}\leq C^{j+1}D_{s_{j}}\cdots D_{s_{0}}\frac{B_{-1}^{2^{-1}%
			\sum_{n=0}^{j}(s_{n}-\tau )}}{B_{j}^{\tau /2}}\left\Vert f_{M+k\left(
		j+2\right) -\sum_{n=0}^{j}s_{n}}\right\Vert _{a},
\end{equation*}
 the only difference being the additional factor $1/B_{j}^{\tau /2}$,  which previously was estimated by $1$. This factor becomes important here, since $M\to \infty$ in this part of the argument, while previously $M$ was fixed.

		It follows that 
		\begin{equation*}
			\frac{S_{j,M}}{M^{M/2}\lambda _{M}^{M}}\leq C^{j+1}D_{s_{j}}\cdots D_{s_{0}}%
			\frac{B_{-1}^{2^{-1}\sum_{n=0}^{j}(s_{n}-\tau )}}{B_{j}^{\tau /2}}\frac{
				\left\Vert f_{B_{-1}}\right\Vert _{a}}{M^{M/2}\lambda _{M}^{M}}\frac{
				B_{-1}^{B_{-1}/2} \lambda _{B_{-1}}^{B_{-1}}}{B_{-1}^{B_{-1}/2}\lambda _{B_{-1}}^{B_{-1}}}.
		\end{equation*}%
As before,
		\begin{equation*}
			B_{-1}^{2^{-1}\sum_{n=0}^{j}(s_{n}-\tau
				)}B_{-1}^{B_{-1}/2}=B_{-1}^{M/2+k/2+\left( k-\tau \right) \left( j+1\right)
				/2},
		\end{equation*}%
		so using  $B_{j} = M+k$ and \begin{equation*}
			\frac{\left\Vert f_{B_{-1}}\right\Vert _{a}}{B_{-1}^{B_{-1}/2}\lambda
				_{B_{-1}}^{B_{-1}}}
			\leq
			\left\Vert f\right\Vert _{\lambda },
		\end{equation*}
	 we get
		\begin{equation*}
			\frac{S_{j,M}}{M^{M/2}\lambda _{M}^{M}}\leq C^{j+1}D_{s_{j}}\cdots D_{s_{0}}%
			\frac{B_{-1}^{M/2+k/2+\left( k-\tau \right) \left( j+1\right) /2}}{\left(
				M+k\right) ^{\tau /2}M^{M/2}}\frac{\lambda _{B_{-1}}^{B_{-1}}}{\lambda
				_{M}^{M}}
				\left\Vert f\right\Vert _{\lambda }.
		\end{equation*}%
	  For every $M \ge 0$, since $B_{-1}\geq M$ and all terms in $\lambda$ are bounded by 1, we have 
$\lambda _{B_{-1}}\leq \lambda _{M}$ and 
		\begin{equation*}
			\frac{\lambda _{B_{-1}}^{B_{-1}}}{\lambda _{M}^{M}}=\left( \frac{\lambda
				_{B_{-1}}}{\lambda _{M}}\right) ^{M}\left( \lambda _{B_{-1}}\right)
			^{k\left( j+2\right) -\sum_{n=0}^{j}s_{n}}\leq \left( \lambda
			_{B_{-1}}\right) ^{k+\left( k-\beta \right) \left( j+1\right) }.
		\end{equation*}
		Furthermore,  if $M\ge 1$ then
		\begin{equation*}
			\left( \frac{B_{-1}}{M}\right) ^{M/2}\leq \left( 1+\frac{k\left( j+2\right) 
			}{M}\right) ^{M/2}\leq e^{\frac{1}{2}k\left( j+2\right) },
		\end{equation*}
	so
		\begin{equation*}
			\frac{S_{j,M}}{M^{M/2}\lambda _{M}^{M}}\leq C^{j+1}D_{s_{j}}\cdots D_{s_{0}}
			\frac{B_{-1}^{k/2+\left( k-\tau \right) \left( j+1\right) /2}}{\left(
				M+k\right) ^{\tau /2}} \ e^{\frac{1}{2}k\left( j+2\right) }\left( \lambda
			_{B_{-1}}\right) ^{k+\left( k-\beta \right) \left( j+1\right) }
			 \left\Vert f\right\Vert _{\lambda }.
		\end{equation*}%
		Next we observe that 
		\begin{eqnarray*}
			\frac{B_{-1}^{\tau /2}}{\left( M+k\right) ^{\tau /2}} &\leq &\left( \frac{%
				M+k\left( j+2\right) }{M+k}\right) ^{\tau /2}=\left( 1+\frac{k\left(
				j+1\right) }{M+k}\right) ^{\tau /2} \\
			&\leq &\left( 1+\left( j+1\right) \right) ^{\tau /2}\leq \left(
			1+2^{j}\right) ^{\tau /2}\leq 2^{\left( j+1\right) \tau /2}.
		\end{eqnarray*}
	 Thus 
		\begin{eqnarray*}
			\frac{S_{j,M}}{M^{M/2}\lambda _{M}^{M}} &
			\leq &C^{j+1}D_{s_{j}}\cdots
			D_{s_{0}}e^{\frac{1}{2}  k \left( j+2\right) } \ 2^{\left( j+1\right) \tau /2} \ B_{-1}^{\left( k-\tau \right)
				/2+\left( k-\tau \right) \left( j+1\right) /2}\left( \lambda
			_{B_{-1}}\right) ^{k+\left( k-\beta \right) \left( j+1\right) } \left\Vert f\right\Vert _{\lambda } \\
			&= &D_{s_{j}}\cdots D_{s_{0}}e^{\frac{1}{2}k}\left( Ce^{\frac{1}{2}%
				k}2^{\tau /2}B_{-1}^{\left( k-\tau \right) /2}\left( \lambda
			_{B_{-1}}\right)^{\left( k-\beta \right) }\right)^{j+1}B_{-1}^{\left(
				k-\tau \right) /2}\left( \lambda _{B_{-1}}\right) ^{k} \left\Vert f\right\Vert _{\lambda }.
		\end{eqnarray*}%
Writing again	$\widetilde{D}:=D_{s_{0}}+\cdots+D_{s_{\beta}}$,   by (\ref{eqconverging0}),  for every 
$M \gg 1$ sufficiently large we have 
		\begin{equation*}
			Ce^{\frac{1}{2}k}2^{\tau /2}\cdot B_{-1}^{\frac{(k-\tau )}{2}}\lambda
			_{B_{-1}}^{(k-\beta )}\leq \frac{1}{2\widetilde{D}},
		\end{equation*}
		from whence it follows that 
		\begin{equation*}
			\frac{S_{j,M}}{M^{M/2}\lambda _{M}^{M}}\leq D_{s_{j}}\cdots D_{s_{0}}e^{%
				\frac{1}{2}k}\left( \frac{1}{2\widetilde{D}}\right) ^{j+1}B_{-1}^{\left(
				k-\tau \right) /2}\left( \lambda _{B_{-1}}\right) ^{k}\left\Vert
			f\right\Vert _{\lambda }.
		\end{equation*}

Therefore
		\begin{equation*}
			\frac{\left\Vert G_{M}\right\Vert _{a}}{M^{M/2}\lambda _{M}^{M}}\leq
			\sum_{j=-1}^{\infty }\sum_{s_{0}=0}^{\beta }\sum_{s_{1}=0}^{\beta
			}\cdots \sum_{s_{j}=0}^{\beta }\frac{S_{j,M}}{M^{M/2}\lambda _{M}^{M}}\leq
			\left\Vert f\right\Vert _{\lambda }B_{-1}^{\left(
				k-\tau \right) /2}\left( \lambda _{B_{-1}}\right) ^{k}e^{\frac{1}{2}k}\sum_{j=-1}^{\infty }%
			\frac{1}{2^{j+1}}.
		\end{equation*}%
	Recalling (\ref{eqconverging0}), 
	we see that
	\begin{equation*}
		\lim_{M\rightarrow \infty } B_{-1}^{2^{-1}  \left( k-\tau
			\right)} 
		\lambda_{B_{-1}}^{k} =0, 
	\end{equation*}
	and thus
		\begin{equation*}
			\frac{\left\Vert G_{M}\right\Vert _{a}}{M^{M/2}\lambda _{M}^{M}}\rightarrow 0.
		\end{equation*}
	\end{proof}

\smallskip \noindent \textbf{Data availability} Data sharing is not
applicable to this article since no data sets were generated or analyzed.

\smallskip \noindent \textbf{Declarations}

\smallskip \noindent \textbf{Conflict of interest} The authors declare that
they have no competing interests.

\smallskip \noindent \textbf{Acknowlegement}  \thanks{The first named author was partially supported by Grant PID2019-106870GB-I00 of the MICINN of Spain,  by ICMAT Severo Ochoa project
CEX2019-000904-S (MICINN), and by
the Madrid Government (Comunidad de Madrid - Spain)  V PRICIT (Regional Programme of Research and Technological Innovation), 2022-2024.}

\end{document}